\definecolor{gree}{rgb}   {0.,   0.66,   0.25 }
\definecolor{marin}{rgb}   {0.,   0.7,   0.7}
\definecolor{mmagenta}{rgb}{0.95,0.0,0.95}
\definecolor{orange}{rgb}   {1,   0.5,   0.}
\definecolor{brown}{rgb}   {0.4,   0.,   0.8}
\definecolor{greymg}{rgb}   {1,   0.,   0.8}
\definecolor{greygr}{rgb}   {0.5,   0.8,   0.5}
\newcommand{\Rd}{\color{red}}
\newcommand{\Bl}{\color{blue}}
\newtheorem{theorem}{Theorem}[section]
\newtheorem{lemma}[theorem]{Lemma}
\newtheorem{corollary}[theorem]{Corollary}
\theoremstyle{definition}
\theoremstyle{remark}
\newtheorem{remark}[theorem]{Remark}
\numberwithin{equation}{section}
\newcommand{\ee}{\hskip0.15ex}
\newcommand{\me}{\hskip-0.15ex}
\newcommand{\dd}[1]{_{\raise-0.6ex\hbox{$\scriptstyle #1$}}}
\newcommand{\di}{\displaystyle}
\newcommand{\on}[1]{\big|_{#1}}
\newcommand {\Norm}[2]{ \mathchoice
    {|\ee #1\ee|\dd{#2}\,}
    {| #1 |_{#2}}
    {| #1 |_{#2}}
    {| #1 |_{#2}} }
\newcommand {\DNorm}[2]{ \mathchoice
    {\|\ee #1\ee\|\dd{#2}\,}
    {\| #1 \|_{#2}}
    {\| #1 \|_{#2}}
    {\| #1 \|_{#2}} }
\newcommand {\Normc}[3]{ \mathchoice
    {|\ee #1\ee|\dd{#2}^{#3}}
    {| #1 |_{#2}^{#3}}
    {| #1 |_{#2}^{#3}}
    {| #1 |_{#2}^{#3}} }
\newcommand {\DNormc}[3]{ \mathchoice
    {\|\ee #1\ee\|\dd{#2}^{#3}}
    {\| #1 \|_{#2}^{#3}}
    {\| #1 \|_{#2}^{#3}}
    {\| #1 \|_{#2}^{#3}} }
\newcommand\bS{{\mathbb S}}
\newcommand\bP{{\mathbb P}}
\newcommand\C{{\mathbb C}}
\newcommand\R{{\mathbb R}}
\newcommand{\sA}{{\mathscr A}}
\newcommand{\sC}{{\mathscr C}}
\newcommand{\sB}{{\mathscr B}}
\newcommand{\sU}{{\mathscr U}}
\newcommand\bc{{\mathbf{c}}}
\newcommand\cM{{\mathcal{M}}}
\newcommand\cO{{\mathcal{O}}}
\newcommand\cU{{\mathcal{U}}}
\newcommand{\rP}{{\mathsf{P}}}
\newcommand{\rQ}{{\mathsf{Q}}}
\newcommand{\rd}{{\mathrm d}}
\newcommand{\sfe}{{\mathsf e}}
\newcommand{\sfE}{{\mathsf E}}
\newcommand{\sfu}{{\mathsf u}}
\newcommand{\sfU}{{\mathsf U}}
\newcommand {\gA}{\mathfrak{A}}
\newcommand {\gE}{\mathfrak{E}}
\newcommand {\gF}{\mathfrak{F}}
\newcommand {\Id}{\mathbb I}
\newcommand{\oR}{\overline{R}}
\let\@setaddressesnow\@setaddresses
  \let\@setaddresses\empty
\def\@maketitle{%
  \normalfont\normalsize
  \@adminfootnotes
  \@mkboth{\@nx\shortauthors}{\@nx\shorttitle}%
  \global\topskip42\p@\relax 
  \@settitle
  \ifx\@empty\authors \else \@setauthors \fi
  \ifx\@empty\@dedicatory
  \else
    \baselineskip18\p@
    \vtop{\centering{\footnotesize\itshape\@dedicatory\@@par}%
      \global\dimen@i\prevdepth}\prevdepth\dimen@i
  \fi
  \@setabstract
  \normalsize
  \if@titlepage
  \vfill
  \@setaddressesnow
    \newpage
  \else
    \dimen@34\p@ \advance\dimen@-\baselineskip
    \vskip\dimen@\relax
  \fi
} 
\begin{document}

\title{Estimates for harmonic functions near pseudo-corners}

\author{Martin Costabel}
\address{Martin Costabel: \ ORCID 0000-0003-4404-9474
(Corresponding Author)}
\email{martin.costabel@univ-rennes1.fr}
\address{IRMAR UMR 6625 du CNRS, Universit\'{e} de Rennes 1 }
\address{Campus de Beaulieu,
35042 Rennes Cedex, France \bigskip}

\author{Monique Dauge}
\address{Monique Dauge: \ ORCID 0000-0003-0846-9009}
\email{monique.dauge@univ-rennes1.fr}
\address{IRMAR UMR 6625 du CNRS, Universit\'{e} de Rennes 1 }
\address{Campus de Beaulieu,
35042 Rennes Cedex, France \bigskip}



\keywords{harmonic Dirichlet problem, corner domain, asymptotic expansion, Sobolev norm}

\subjclass{35B25, 35C20, 35J05}

\begin{abstract}
It is well known that derivatives of solutions to elliptic boundary value problems may become unbounded near the corner of a domain with a conical singularity, even if the data are smooth. When the corner domain is approximated by more regular domains, then higher order Sobolev norms of the solutions on these domains can blow up in the limit. We study this blow-up in the simple example of the Laplace operator with Dirichlet conditions in two situations: the rounding of a corner in any dimension, and  the two-dimensional situation where a polygonal corner is replaced by two or more corners with smaller angles. We show how an inner expansion derived from a more general recent result on converging expansions into generalized power series can be employed to prove simple and explicit estimations for Sobolev norms and singularity coefficients by powers of the approximation scale.
\end{abstract}

\maketitle

{\footnotesize
\parskip 1pt
\tableofcontents
}

\section{Introduction}

We study the behavior of Sobolev norms of the solution of the Dirichlet problem for the Laplacian near a conical singularity of the boundary, when the domain $\Omega$ is approximated by a family of more regular domains $\Omega_{\varepsilon}$ that is self-similar in a neighborhood of the singular point. 
The approximating domains may be smooth, in which case Sobolev norms of any positive order can be estimated, or they may themselves have conical singularities, in which case the corresponding solutions only have finite regularity. 
In any case, it is to be expected that Sobolev norms corresponding to a regularity above the threshold defined by the first singular exponent of the conical point will not remain bounded in the limit. In Section~\ref{S:smoothpseudo} we estimate the order of this blow-up in terms of powers of the approximation scale $\varepsilon$, see Theorem~\ref{th:Wmp}.

In Section~\ref{S:polygon} we discuss in detail the case of a polygon in two dimensions with a non-convex corner, approximated by a finite number of corners with smaller, but still non-convex opening angles. Here it is interesting to study not only fractional Sobolev norms of order below the regularity threshold, but also the often practically important $H^{2}$ regularity. Namely, for $L^{2}$ right-hand sides, one can write standard decompositions of the solutions into corner singular functions and regular parts in $H^{2}$. Then one can either observe the blow-up of the coefficients of the singular functions and the $H^{2}$ norms of the regular parts or, conversely, assume that a family of right-hand sides is given such that the solutions on $\Omega_{\varepsilon}$ have $H^{2}$ regularity. With each one of these right-hand sides, the solutions on $\Omega$ will, in general, not be $H^{2}$ regular, but we show that the coefficients of the singular function of the solutions are bounded by a constant times the $L^{2}$ norm of the right-hand side, where the constant will tend to zero with a certain power of $\varepsilon$.

Our main technique is the expansion in convergent generalized power series in $\varepsilon$ proved in the recent paper \cite{CoDa2Mu:2024}. Whereas \cite{CoDa2Mu:2024} considers  convergence only in the energy norm $H^{1}$, we show here how these same expansions can be used to obtain in a simple way estimates for a range of Sobolev spaces of integer and fractional order.
In this way we can easily and explicitly analyze details of the asymptotic behavior of the solution of the Dirichlet problem on a class of singularly perturbed domains that has been studied in a more general context with techniques of asymptotic expansions. In \cite{MazNazPla:2000} it is shown how to apply those techniques to a wider class of elliptic boundary value problems, with variable coefficients that may even have singularities at the corners, whereas our use of convergent expansions confines us for the moment to the Dirichlet problem for the Laplacian, but has the advantage of simplicity and precision of the resulting estimates.

For the problem of approximation of a corner domain by smooth domains there exist studies using different techniques and different assumptions, see \cite{DanielLabrunieNistor:2024} for a recent example and a survey. Our examples are rather more general in the smoothness hypotheses for the approximating domains, but we make the restrictive assumption of a self-similar behavior. This allows us to not only precisely estimate norms that remain bounded in the limit, but also to describe the precise behavior of norms that explode in the limit.

These results provide transparent and easy to understand examples for the discussion of some aspects of potential theory on Lipschitz domains. 

In particular, concerning the question of regularity in Sobolev spaces, they illustrate how the norm of the solution operator of the Dirichlet problem can depend not only on the angles of the corners of a corner domain (i.e.\ its Lipschitz constant), but also on the distance between the corners, in contrast to the mere existence of a bounded solution operator that only depends on these angles, as long as there are only a finite number of corners.

In addition, it is seen that the choice of norm in fractional Sobolev spaces is crucial. 
In interpolation theory for Banach spaces, it has been known \cite{Ch-WHewettMoiola:2015,Ch-WHewettMoiola:2022} that for the case of Hilbert spaces the standard methods, defined for example by the $K$-functional, the $J$-functional, complex interpolation, or fractional powers of positive operators, all give the same interpolation spaces with identical norms and that for Sobolev spaces on Lipschitz domains, these norms are equivalent to the natural norms, i.e. the norms defined equivalently either by Sobolev-Slobodecki\u{\i} fractional integrals or by restriction from a larger smooth domain. However, the constants in these norm equivalences may depend on the domain in a way that is hard to estimate, especially if compounded by the notoriously difficult problem of interpolation of subspaces \cite{Asekritova:2015,Zerulla:2022}. For our family of domains $\Omega_{\varepsilon}$, we find that the interpolation norm 
may provide an operator norm that remains uniformly bounded with respect to $\varepsilon$, whereas the natural norm may lead to an operator norm that explodes as $\varepsilon\to0$. Thus we have examples where the two norms then are  equivalent, but in a non-uniform way.

\section{Preliminaries}
\subsection{Variational solutions and their regularity in smooth domains}
In this note, we address the Dirichlet problem for the Laplace operator, as a prototype of elliptic boundary value problem.  In a domain (a connected open set) $\sU$ of $\R^n$  this refers to the equations that we will simply refer to as ``Dirichlet problem''
\begin{equation}
\label{eq:1}
\begin{cases}
   \begin{array}{rl}
   \Delta u = f &\mbox{ in }\sU,\\
   u = 0 & \mbox{ on }\partial\sU.
   \end{array}
\end{cases}
\end{equation}
We consider weak solutions of problem \eqref{eq:1}, that is solutions of the variational formulation
\begin{equation}
\label{eq:2}
   u\in V,\quad \forall v\in V,\quad
   -\int_\sU \nabla u\cdot\nabla v = \int_\sU f\,v
\end{equation}
where the space $V$ incorporates for $v$ the conditions that $\nabla v\in L^2(\sU)^n$ and $v=0$ on $\partial\sU$.

If $\sU$ is {\em bounded}, one takes
\[
   V = H^1_0(\sU) \quad\mbox{closure of $\sC^\infty_0(\sU)$ in $H^1(\sU)$.}
\]
Then for any $f$ in the dual space $H^{-1}(\sU)$ of $H^1_0(\sU)$, the solution $u$ to \eqref{eq:2} exists and is unique in $H^1_0(\sU)$.

If, moreover, the domain $\sU$ has a smooth boundary, it is known since \cite{ADN:1959}, see for example \cite[Chapter 5]{Triebel78} that we have an optimal {\em regularity shift} for $u$ if $f$ is more regular in some scales of Sobolev spaces, namely 
\begin{subequations}
\begin{equation}
\label{eq:Wmp}
   \mbox{if $f\in W^{m-2,p}(\sU)$ for an integer $m\ge2$, then $u\in W^{m,p}(\sU)$,}
\end{equation}
(here we assume $m-\frac{n}{p} > 1 -\frac{n}{2}$, which ensures that $W^{m-2,p}(\sU)$ is embedded in $H^{-1}(\sU)$)
\begin{equation}
\label{eq:Hs}
   \mbox{if $f\in H^{s-2}(\sU)$ for a real $s\ge2$, then $u\in H^s(\sU)$.}
\end{equation}
\end{subequations}
Here  for $1<p<\infty$, $W^{m,p}(\sU)$ denotes the space of $v\in L^p(\sU)$ such that $\partial^\alpha u\in L^p(\sU)$ for all $|\alpha|\le m$, whereas $H^m(\sU)=W^{m,2}(\sU)$ denotes the scale of Hilbertian Sobolev spaces completed by the spaces $H^s(\sU)$ using fractional Sobolev-Slobodecki\u{\i} seminorms (see also sec.\,\ref{ss:SS}).

\subsection{Harmonic functions in regular cones}
Regular cones are models in the theory of elliptic problems in domains with conical singularities as initiated by Kondrat'ev \cite{Kond:1967}. The case of the Dirichlet Laplacian is explicitly treated by Grisvard \cite{Grisvard:1971}. For further use we revisit the bases of this study.

A {\em regular cone} $\Gamma$ is a positively homogeneous subset of $\R^n$ such that its spherical cap
\[
   \widehat\Gamma := \Gamma\cap\bS^{n-1}
\]
is open in $\bS^{n-1}$ and has a smooth, non-empty, boundary $\partial\widehat\Gamma\subset\bS^{n-1}$. We assume here that $\widehat\Gamma$ is connected.

Written in polar coordinates $(r,\vartheta)\in\R_+\times\bS^{n-1}$, the cone $\Gamma$ is isomorphic to $\R_+\times\widehat\Gamma$ and the Laplace operator associated with Dirichlet conditions on $\partial\Gamma$ becomes
\[
   \partial^2_r + (n-1)r^{-1}\partial_r + r^{-2}\Delta^{\sf dir}_{\widehat\Gamma}
\]
where $\Delta^{\sf dir}_{\widehat\Gamma}$ is the spherical Laplacian (Laplace-Beltrami operator) with Dirichlet conditions on $\partial\widehat\Gamma$.
The operator $-\Delta^{\sf dir}_{\widehat\Gamma}$ is an unbounded selfadjoint and positive operator in $L^{2}(\widehat\Gamma)$ with domain $H^2\cap H^1_0(\widehat\Gamma)$. Its eigenvalues $\mu_j$ form an unbounded sequence and the associated eigenfunctions $\psi_j$ form an orthonormal basis in $L^2(\widehat\Gamma)$. The assumptions on $\widehat\Gamma$ imply that
\begin{equation}
\label{eq:muj}
   0 < \mu_1 <  \mu_2 \le \mu_3 \le \ldots .
\end{equation}
The eigen-equation
\[
   -\Delta^{\sf dir}_{\widehat\Gamma} \psi_j = \mu_j\,\psi_j,\quad \forall j\ge1
\]
implies that any function of the form $r^\lambda\psi_j$ (with $\lambda\in\C$) satisfies
\[
   \Delta(r^\lambda\psi_j) = (\lambda^2+(n-2)\lambda - \mu_j)\psi_j\,.
\]
Introducing the roots $\lambda_j^\pm$ of the equation  $\lambda^2+(n-2)\lambda - \mu_j = 0$\,,
\begin{equation}
\label{eq:lj}
  \lambda_j^\pm = 1-\frac{n}{2} \pm \sqrt{\left(1-\frac{n}{2}\right)^2 + \mu_j}\;,
\end{equation}
we find that the functions
\begin{equation}
\label{eq:hj}
   h_j^\pm(x) := r^{\lambda_j^\pm} \psi_j(\vartheta)
\end{equation}
are harmonic in $\Gamma$, and $0$ on $\partial\Gamma$.

In dimension $n=2$, cones $\Gamma$ are plane sectors, and we can choose polar coordinates so that $\widehat\Gamma=(0,\omega)$ for some $\omega\in(0,2\pi)$. Then
$\mu_j = \left(\frac{j\pi}{\omega}\right)^2$, 
$\psi_j(\theta) = \sin\left(\frac{j\pi\theta}{\omega}\right)$, and
$\lambda^\pm_j = \pm \frac{j\pi}{\omega}$.

\subsection{Dirichlet problem in corner domains}
To simplify the exposition, we call corner domain a bounded connected open set $\Omega$ that coincides with a regular cone $\Gamma$ in a neighborhood of its vertex $0$ and is smooth everywhere else. More formally, denote by $\sB(\rho)$ the ball with center $0$ and radius $\rho$, and $\sB^\complement(\rho)$ its complement in $\R^n$.
Then $\Omega$ is such that for some $r_0>0$
\[
   \Omega\cap\sB(r_0)=\Gamma\cap\sB(r_0) \quad\mbox{and}\quad
   \partial\Omega \cap \sB^\complement(r_0/2) \;\mbox{ regular}.
\]
Let $\varphi$ be a {\em smooth cutoff} function such that
\begin{equation}
\label{eq:phi}
\varphi = 1 \quad \mbox{on }  \sB(r_0/2)\, , \qquad 
\varphi = 0 \quad \mbox{on }  \sB^\complement(r_0).
\end{equation}

The following lemma shows in which sense the functions $h^+_j$ can be considered as {\em singularities of the Dirichlet problem} and is a straightforward consequence of their degree of homogeneity.

\begin{lemma}
\label{lem:2.1}
Let $u = \varphi h^+_j$ i.e. $u(x)=\varphi(x)\, r^{\lambda^+_j}\psi_j(\vartheta)$. 
Denote by $\bP$ the space of polynomials in the variable $x$. Then
\begin{enumerate}
\item $u\in H^1_0(\Omega)$ (since $\lambda^+_j>0\ge 1-\frac{n}{2}$)
\item $\Delta u= 0$ in $\Omega\cap\sB(r_0/2)$ and $\Delta u\in \sC^\infty(\overline\Omega)$,
\item $u\in W^{m,p}(\Omega)$ \;if and only if\; 
      \{$\lambda^+_j>m-\frac{n}{p}$ \;\;or\;\; $h^+_j\in\bP$ \}.
\end{enumerate}
\end{lemma}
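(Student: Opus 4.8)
The plan is to reduce everything to the behavior of the function near the vertex, since away from $0$ the cutoff $\varphi$ is supported in the smooth part of $\overline\Omega$ and $h^+_j$ is itself smooth there. Concretely, I would first dispose of items (1) and (2). For (1), note that $\nabla(r^{\lambda}\psi_j)$ is homogeneous of degree $\lambda-1$, so $|\nabla u|^2$ is integrable near $0$ iff $2(\lambda^+_j-1)+n-1>-1$, i.e.\ $\lambda^+_j>1-\tfrac{n}{2}$, which holds since $\lambda^+_j>0$; away from $0$ there is no issue because $u$ is smooth and compactly supported in $\overline\Omega$, and $u$ vanishes on $\partial\Omega$ by construction, so $u\in H^1_0(\Omega)$. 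For (2), inside $\sB(r_0/2)$ we have $\varphi\equiv1$, hence $\Delta u=\Delta h^+_j=0$ there by the computation in \eqref{eq:hj}; on the annulus $\sB(r_0)\setminus\sB(r_0/2)$ the function $h^+_j$ is smooth (it is harmonic and bounded away from the vertex, and $\psi_j$ is smooth), so $\Delta u=\Delta(\varphi h^+_j)=(\Delta\varphi)h^+_j+2\nabla\varphi\cdot\nabla h^+_j$ is smooth there; outside $\sB(r_0)$, $u\equiv0$. Thus $\Delta u\in\Cinf(\overline\Omega)$.

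The heart of the matter is (3). The ``if'' direction splits into two cases. If $h^+_j\in\bP$ — which happens exactly when $\lambda^+_j$ is a nonnegative integer and the angular part combines with $r^{\lambda^+_j}$ into a polynomial — then $u=\varphi h^+_j$ is a smooth compactly supported function on $\overline\Omega$, hence lies in every $W^{m,p}(\Omega)$. Otherwise, I would argue that $u\in W^{m,p}(\Omega)$ iff $h^+_j\in W^{m,p}$ near the vertex (the part of $u$ supported away from $0$ is always smooth), and the latter is governed by homogeneity: the derivatives $\partial^\alpha(r^{\lambda^+_j}\psi_j)$ with $|\alpha|=m$ are finite sums of terms homogeneous of degree $\lambda^+_j-m$, each of the form $r^{\lambda^+_j-m}g(\vartheta)$ with $g$ smooth on $\widehat\Gamma$. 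Such a term is in $L^p$ near $0$ iff $p(\lambda^+_j-m)+n-1>-1$, i.e.\ $\lambda^+_j>m-\tfrac{n}{p}$. One has to be a little careful that no cancellation among these terms produces a higher power of $r$; this is where the hypothesis $h^+_j\notin\bP$ is used — for a single homogeneous function $r^{\lambda}\psi_j$ with $\psi_j\not\equiv$ polynomial-trace, the top-degree part $r^{\lambda-m}(\,\cdot\,)$ does not vanish identically, so the $L^p$ criterion is sharp. For the ``only if'' direction, the contrapositive: if $\lambda^+_j\le m-\tfrac{n}{p}$ and $h^+_j\notin\bP$, then one exhibits a specific $m$-th order derivative whose leading homogeneous part $r^{\lambda^+_j-m}g(\vartheta)$ has $g\not\equiv0$, and the integral $\int_{\Gamma\cap\sB(r_0/2)}|r^{\lambda^+_j-m}g(\vartheta)|^p\,dx$ diverges, so $u\notin W^{m,p}(\Omega)$.

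I expect the main obstacle to be the bookkeeping in the sharpness argument: showing that the top-order homogeneous component of $\partial^\alpha(r^{\lambda^+_j}\psi_j)$, summed over a chosen multi-index $\alpha$ (or over all $|\alpha|=m$), genuinely fails to be in $L^p$ when $\lambda^+_j\le m-\tfrac np$, rather than accidentally cancelling. The clean way to handle this is to work in polar coordinates from the start and observe that differentiating $r^{\lambda}\psi_j(\vartheta)$ either lowers the power of $r$ by one (via $\partial_r$) or acts by $r^{-1}$ times a tangential operator, so every pure $m$-fold derivative has a well-defined leading term $r^{\lambda-m}\times(\text{angular function})$; one then only needs that at least one such angular function is nonzero, which follows because $r^{\lambda}\psi_j$ is not annihilated by all $m$-th order constant-coefficient operators unless it is a polynomial of degree $<m$ — and the case $\lambda^+_j$ a nonnegative integer with $r^{\lambda^+_j}\psi_j$ polynomial is precisely the excluded alternative $h^+_j\in\bP$. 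The remaining verifications (integrability thresholds, smoothness of the cutoff region) are routine scaling computations.
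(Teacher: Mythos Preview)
Your argument is correct and follows the same route the paper has in mind: the authors give no detailed proof, stating only that the lemma ``is a straightforward consequence of their degree of homogeneity,'' and your proposal is precisely a careful unpacking of that remark. One small simplification: since $h^+_j=r^{\lambda^+_j}\psi_j(\vartheta)$ is \emph{exactly} homogeneous of degree $\lambda^+_j$, each derivative $\partial^\alpha h^+_j$ with $|\alpha|=m$ is \emph{exactly} homogeneous of degree $\lambda^+_j-m$, so there is no ``leading term'' to isolate and no risk of cancellation among terms of different degrees; the only question is whether \emph{all} $m$-th order derivatives vanish identically, which is equivalent to $h^+_j$ being a polynomial of degree $<m$---the excluded alternative $h^+_j\in\bP$.
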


It is much less straightforward to prove that, when the right-hand side $f$ is regular and flat near the vertex, the functions $h^+_j$ with $\lambda^+_j<m-\frac{n}{p}$ are the only possible singular functions. We give two statements that illustrate this principle.

The first statement (which will be used in the sequel) is an expansion of harmonic functions in the vicinity of the vertex. It relies on classical expansions along the eigenbasis of the operator $\Delta^{\sf dir}_{\widehat\Gamma}$. Let us introduce the spaces
\begin{equation}
\label{eq:gF}
   \gF_\Omega := \{g \in L^2(\Omega) ,\;\; g\on{\sB(\frac{r_0}{2})}= 0\}
   \quad\mbox{and}\quad
   \gE_\Omega := \{u\in H^1_0(\Omega),\;\; \Delta u \in\gF_\Omega\}.
\end{equation} 

\begin{theorem}[\cite{CoDa2Mu:2024}]
\label{th:cj}
Let $u\in\gE_\Omega$.
Write $u(x) = \widetilde u(r,\vartheta)$ for $x\in\Omega\cap\sB(r_0)$.
Then for all $j\ge1$ and $\rho\in(0,\frac{r_0}{2}]$, the functional 
\begin{equation}
\label{eq:cjdef}
   \bc_j : u \mapsto \di\rho^{-\lambda^+_j}
   \int_{\widehat{\Gamma}} \widetilde u (\rho,\vartheta) \;\psi_j(\vartheta)\, \rd\sigma_{\vartheta}
\end{equation}
is well defined and independent of $\rho$. The sequence $\{\mathbf{c}_j(u)\}$ satisfies for all $\rho\in(0,\frac{r_0}{2}]$:
\begin{equation}
\label{eq:cj}
   \sum_{j \geq 1} \lambda^+_j \rho^{2\lambda^+_j+n-2}\,|\mathbf{c}_j(u)|^2
   = \int_{\Gamma\cap\sB(\rho)} |\nabla u|^2 \,\rd x
\end{equation}
and we have the representation formula for $u$:
\[
   u = \sum_{j \geq 1} \;\mathbf{c}_j(u) \,h^+_j \quad
   \mbox{with convergence in $H^1(\Gamma\cap{\sB(\frac{r_0}{2})})$.}
\]
\end{theorem}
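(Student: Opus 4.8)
The plan is to expand $\widetilde u(r,\vartheta)$ in the orthonormal eigenbasis $\{\psi_j\}$ of $-\Delta^{\sf dir}_{\widehat\Gamma}$ for each fixed $r\in(0,r_0)$, and to identify the radial coefficients by using that $u$ is harmonic in $\Gamma\cap\sB(r_0/2)$. Concretely, for $r\in(0,\frac{r_0}{2})$ set $a_j(r):=\int_{\widehat\Gamma}\widetilde u(r,\vartheta)\psi_j(\vartheta)\,\rd\sigma_\vartheta$. Since $\Delta u=0$ there and the Laplacian in polar coordinates is $\partial_r^2+(n-1)r^{-1}\partial_r+r^{-2}\Delta^{\sf dir}_{\widehat\Gamma}$, testing $\Delta u=0$ against $\psi_j$ and using $-\Delta^{\sf dir}_{\widehat\Gamma}\psi_j=\mu_j\psi_j$ (self-adjointness of the spherical operator, with $\psi_j\in H^1_0(\widehat\Gamma)$ absorbing the boundary terms) gives the Euler ODE $a_j''+(n-1)r^{-1}a_j'-\mu_j r^{-2}a_j=0$, whose solutions are spanned by $r^{\lambda_j^+}$ and $r^{\lambda_j^-}$. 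Hence $a_j(r)=c_j r^{\lambda_j^+}+d_j r^{\lambda_j^-}$ on $(0,\frac{r_0}{2})$. The functional $\bc_j(u)$ in \eqref{eq:cjdef} is precisely $\rho^{-\lambda_j^+}a_j(\rho)=c_j+d_j\rho^{\lambda_j^--\lambda_j^+}$, so independence of $\rho$ is equivalent to $d_j=0$ for all $j$, and then $\bc_j(u)=c_j$.

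The key point is therefore to show $d_j=0$, i.e. that the singular part $r^{\lambda_j^-}\psi_j$ is excluded by $u\in H^1_0(\Omega)$. First I would record the energy identity: by Parseval on $\widehat\Gamma$ at each radius and Fubini,
\[
  \int_{\Gamma\cap\sB(\rho)}|\nabla u|^2\,\rd x
  =\sum_{j\ge1}\int_0^\rho\!\Big(|a_j'(r)|^2+\mu_j r^{-2}|a_j(r)|^2\Big)\,r^{n-1}\,\rd r,
\]
which is finite since $u\in H^1(\Omega)$. Plugging $a_j=c_j r^{\lambda_j^+}+d_j r^{\lambda_j^-}$ and computing the $r$-integral, the pure $r^{\lambda_j^+}$ term contributes $\lambda_j^+\rho^{2\lambda_j^++n-2}|c_j|^2$ (after using $(\lambda_j^+)^2+\mu_j$-type algebra and the relation $(\lambda_j^+)^2+(n-2)\lambda_j^+=\mu_j$), the pure $r^{\lambda_j^-}$ term contributes $\lambda_j^-\rho^{2\lambda_j^-+n-2}|d_j|^2$, and the cross term is $\rho$-independent. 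Since $\lambda_j^-\le 1-\frac n2\le 0<-\tfrac{n-2}{2}$ (using $\mu_1>0$ so $\lambda_j^-\ne 1-\tfrac n2$ when $n=2$), the exponent $2\lambda_j^-+n-2<0$, so $\lambda_j^-\rho^{2\lambda_j^-+n-2}|d_j|^2$ is a negative multiple of $\rho^{2\lambda_j^-+n-2}$ which blows up as $\rho\to0$ unless $d_j=0$; finiteness of the total energy (in particular of the single-$j$ summand) forces $d_j=0$. This also delivers the energy formula \eqref{eq:cj} once all $d_j$ vanish.

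Finally, with $d_j=0$ we have $\widetilde u(r,\vartheta)=\sum_j c_j r^{\lambda_j^+}\psi_j(\vartheta)=\sum_j c_j h_j^+$, and the $H^1(\Gamma\cap\sB(\frac{r_0}{2}))$ convergence of the partial sums follows from \eqref{eq:cj} applied to the tails: the squared $H^1$-seminorm of $u-\sum_{j\le N}c_j h_j^+$ on $\Gamma\cap\sB(\frac{r_0}{2})$ equals $\sum_{j>N}\lambda_j^+(\frac{r_0}{2})^{2\lambda_j^++n-2}|c_j|^2\to0$, and control of the full $H^1$ norm (including the $L^2$ part) comes from a Poincaré-type inequality on $\Gamma\cap\sB(\frac{r_0}{2})$ since all terms vanish on $\partial\widehat\Gamma$ and at $r=0$. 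I expect the main obstacle to be the justification of termwise differentiation and the Parseval/Fubini interchange for $u$ merely in $\gE_\Omega$ rather than smooth — one must argue that the series $\sum_j a_j(r)\psi_j$ and its formal $r$- and $\vartheta$-derivatives converge in the appropriate $L^2$ sense uniformly on compact $r$-intervals, which is where a regularization argument (approximating $u$ by smooth harmonic functions, or working with the trace of $u$ on spheres $r=\rho$ via the well-definedness assumption built into the statement) will be needed; modulo that, everything reduces to the explicit Euler-equation computation above. This is exactly the content cited from \cite{CoDa2Mu:2024}, so I would invoke that reference for the analytic underpinnings and present the ODE/energy computation as the self-contained core.
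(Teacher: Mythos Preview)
The paper does not prove this theorem; it is quoted from \cite{CoDa2Mu:2024} and only the one-line hint ``It relies on classical expansions along the eigenbasis of the operator $\Delta^{\sf dir}_{\widehat\Gamma}$'' is given. Your outline is precisely that classical argument, so there is nothing to contrast at the level of strategy.

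A few points where the write-up needs tightening. First, the inequality chain ``$\lambda_j^-\le 1-\tfrac n2\le 0<-\tfrac{n-2}{2}$'' is garbled (the last two quantities are equal). What you actually need, and what holds because $\mu_j>0$, is the strict inequality $\lambda_j^-<1-\tfrac n2$, equivalently $2\lambda_j^-+n-2<0$. Second, the cross term in the energy is not merely ``$\rho$-independent'' but identically zero: using $\lambda_j^+\lambda_j^-=-\mu_j$ one gets $(\lambda_j^+\lambda_j^-+\mu_j)=0$ as the coefficient of the cross integrand. Third, and most importantly, your elimination of $d_j$ is phrased backwards. You write the ``contribution'' $\lambda_j^-\rho^{2\lambda_j^-+n-2}|d_j|^2$ and then let $\rho\to0$; but this expression is the value of the antiderivative at the \emph{upper} endpoint, while the lower endpoint gives $+\infty$. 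The clean statement is: since the cross term vanishes, the $j$-th energy splits as a sum of two nonnegative integrals, and the $d_j$-part is $|d_j|^2\bigl((\lambda_j^-)^2+\mu_j\bigr)\int_0^\rho r^{2\lambda_j^-+n-3}\,\rd r$, which is $+\infty$ for any fixed $\rho>0$ unless $d_j=0$. Finiteness of $\|\nabla u\|_{L^2}^2$ then forces $d_j=0$ directly, with no limit in $\rho$ needed.

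Your self-identified obstacle (justifying the Parseval/Fubini step and the ODE for $a_j$ when $u$ is only in $\gE_\Omega$) is the genuine technical content here; the rest is algebra. Since the paper defers this to \cite{CoDa2Mu:2024}, invoking that reference for the analytic justification is appropriate.
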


The second statement is an illustration in the particular case of problem \eqref{eq:1} of very general statements that stem from the theory of elliptic boundary value problem in domains with conical singularities.
Set for $\beta\in\R$ the weighted version of the space $W^{m,p}(\Omega)$
\[
   W^{m,p}_\beta(\Omega) = \{v\in L^{p}_{\rm loc}(\Omega),\quad
   r^{|\alpha|-m+\beta} \partial^\alpha u\in L^p(\Omega),\quad |\alpha|\le m\}.
\]
Note that
\begin{itemize}
\item $\Delta$ is continuous from $W^{m,p}_\beta(\Omega)$  into $W^{m-2,p}_\beta(\Omega)$,
\item $\varphi h^+_j\in W^{m,p}_\beta(\Omega)$ \; if and only if \; $\lambda^+_j > m-\frac{n}{p}-\beta$.
\end{itemize}

\begin{theorem}[\cite{Kond:1967} for $p=2$, \cite{MazPla:1978} for general $p$]
\label{T:regsing}
Let $p>1$, $m\ge2$ and $\beta\in\R$ so that $W^{m,p}_\beta(\Omega)$ is embedded in $H^{1}(\Omega)$.
Let $u$ solution of \eqref{eq:1} with $f\in W^{m-2,p}_\beta(\Omega)$.
Assume  $\lambda^+_j \neq m-\frac{n}{p}-\beta$ for all $j\ge 1$.
Then there exist coefficients $c_j$ for $j$ such that $\lambda^+_j < m-\frac{n}{p}-\beta$ satisfying
\[
   u \; - \;\varphi\!\!\sum_{j\ge1,\; \lambda^+_j \,<\, m-\frac{n}{p}-\beta} 
   c_j\,h^+_j \in W^{m,p}_\beta(\Omega)\,.
\]
\end{theorem}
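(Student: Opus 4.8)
The plan is to follow the classical Kondrat'ev scheme: localize near the vertex, reduce to the model cone $\Gamma$, straighten $\Gamma$ into a half-cylinder by $r=e^t$, and read off the singular functions $h^+_j$ as residues picked up when a Mellin inversion contour is shifted across the poles of the inverse operator pencil. First I would dispose of the region away from the vertex: on $\Omega\cap\sB^\complement(r_0/4)$ the boundary is smooth and $\Delta$ is uniformly elliptic, so the flat regularity shift~\eqref{eq:Wmp} gives $u\in W^{m,p}$ there, and since the weight $r^{|\alpha|-m+\beta}$ is then bounded above and below, that part of $u$ already lies in $W^{m,p}_\beta(\Omega)$. It remains to study $v:=\varphi u$, which satisfies $\Delta v=g$ in the whole cone $\Gamma$ with $g:=\varphi f+[\Delta,\varphi]u$; the commutator is supported in $\{r_0/2\le r\le r_0\}$ where $u$ is already smooth, so $g\in W^{m-2,p}_\beta(\Gamma)$, and $v\in H^1_0(\Gamma)$ is supported in $\sB(r_0)$.

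Next I would straighten $\Gamma$ into $(-\infty,\log r_0)\times\widehat\Gamma$ via $r=e^t$: then $r^2\Delta$ becomes $\partial_t^2+(n-2)\partial_t+\Delta^{\sf dir}_{\widehat\Gamma}$ with $t$-independent coefficients, and the norm of $W^{k,p}_\beta(\Gamma)$ becomes a Sobolev norm on the cylinder with an exponential weight whose Mellin line $\Re\lambda=-\sigma$ equals $\gamma:=m-\tfrac np-\beta$ for the target space $W^{m,p}_\beta$ and $\gamma-2$ for the data space $W^{m-2,p}_\beta$. The Mellin transform $\widehat v(\lambda,\cdot)=\int v(e^t,\cdot)\,e^{-\lambda t}\,dt$ turns the equation into
\[
   \cA(\lambda)\,\widehat v(\lambda,\cdot)=\widehat{r^2 g}(\lambda,\cdot),\qquad
   \cA(\lambda)=\lambda^2+(n-2)\lambda+\Delta^{\sf dir}_{\widehat\Gamma}\colon\ H^2\cap H^1_0(\widehat\Gamma)\to L^2(\widehat\Gamma),
\]
which by~\eqref{eq:lj} is boundedly invertible off the discrete set $\{\lambda^\pm_j\}$. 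The analytic core is the resolvent bound: diagonalizing along the $\psi_j$ gives $\cA(\lambda)^{-1}=\sum_j(\lambda^2+(n-2)\lambda-\mu_j)^{-1}\langle\,\cdot\,,\psi_j\rangle\psi_j$, and estimating this scalar family yields $\|\cA(\lambda)^{-1}\|_{L^2\to H^k}=O\big((1+|\lambda|)^{k-2}\big)$ uniformly on each vertical line at positive distance from $\{\lambda^\pm_j\}$; by the embedding hypothesis $\gamma\ge1-\tfrac n2$, and the hypothesis $\lambda^+_j\neq\gamma$ (together with $\lambda^-_j<1-\tfrac n2$) makes $\Re\lambda=\gamma$ such a line.

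Then I would run the contour shift. Since $v\in H^1_0(\Gamma)$ is supported near the vertex, $\widehat v$ is holomorphic for $\Re\lambda<1-\tfrac n2$ and lies in $L^2$ on the line $\Re\lambda=1-\tfrac n2$, which carries no $\lambda^\pm_j$ because $\mu_1>0$; the identity $\widehat v(\lambda)=\cA(\lambda)^{-1}\widehat{r^2 g}(\lambda)$ then continues $\widehat v$ meromorphically up to $\Re\lambda=\gamma$, with poles only at the $\lambda^+_j\in(1-\tfrac n2,\gamma)$ and with decay $O((1+|\lambda|)^{-2})$ along the strip. Representing $v$ by the inverse Mellin integral on $\Re\lambda=1-\tfrac n2$ and moving the contour to $\Re\lambda=\gamma$ then gives
\[
   v=\sum_{\lambda^+_j<\gamma}\operatorname{Res}_{\lambda^+_j}\!\big(\widehat v(\lambda)\,r^\lambda\big)
   \;+\;\tfrac1{2\pi i}\int_{\Re\lambda=\gamma}\widehat v(\lambda)\,r^\lambda\,d\lambda .
\]
Each pole is simple, because $\mu_j>0$ forces $\lambda^+_j\neq\lambda^-_k$ so that $\cA(\lambda)$ has no Jordan chains and no $\log r$ terms arise, and its residue is a multiple of $r^{\lambda^+_j}\psi_j=h^+_j$, which supplies the coefficients $c_j$ (consistent, where relevant, with the functionals $\bc_j$ of Theorem~\ref{th:cj}). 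The remaining line integral is, near the vertex, exactly $u-\sum_{\lambda^+_j<\gamma}c_j h^+_j$; glued to the smooth contribution from the first step, it is the remainder, and the assertion is that it belongs to $W^{m,p}_\beta(\Omega)$.

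The step I expect to be the main obstacle is precisely this last membership --- controlling the inverse Mellin integral on $\Re\lambda=\gamma$ in the $W^{m,p}_\beta$ norm. For $p=2$ it is immediate from Parseval: the $W^{m,2}_\beta$ norm of the remainder translates into $\int_{\Re\lambda=\gamma}\|\widehat v(\lambda)\|^2$ in a $|\lambda|$-weighted $H^m(\widehat\Gamma)$ norm, which the resolvent estimate of the second paragraph converts into the $W^{m-2,2}_\beta$ norm of $g$. For $p\neq2$ there is no Parseval identity, and one must instead view $g\mapsto\cM^{-1}\!\big[\cA(\lambda)^{-1}\widehat{r^2 g}\big]$ as an operator-valued Mellin multiplier and invoke a Mikhlin--H\"ormander type $L^p$ multiplier theorem on vertical lines, whose hypotheses follow from differentiating the resolvent estimate in $\lambda$; this is exactly the point where the extension of \cite{Kond:1967} by \cite{MazPla:1978} enters. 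A secondary, purely clerical, difficulty is tracking which exponential weight $\sigma$ corresponds to which triple $(k,p,\beta)$, so that the contour lines $\Re\lambda=1-\tfrac n2$ and $\Re\lambda=\gamma$ are correctly matched with the energy space and with $W^{m,p}_\beta$.
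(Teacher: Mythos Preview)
The paper does not give its own proof of Theorem~\ref{T:regsing}; the result is quoted as a classical fact from \cite{Kond:1967} and \cite{MazPla:1978}, with no argument supplied. Your sketch is therefore not to be compared against anything in the paper, but it is a correct outline of the Kondrat'ev--Maz'ya--Plamenevski\u{\i} scheme in those references: localize near the vertex, pass to the model cone, Mellin-transform to invert the pencil $\cA(\lambda)=\lambda^2+(n-2)\lambda+\Delta^{\sf dir}_{\widehat\Gamma}$, shift the inversion contour from the energy line to $\Re\lambda=\gamma:=m-\tfrac np-\beta$, collect the residues at the simple poles $\lambda^+_j\in(0,\gamma)$ as the singular terms $c_j h^+_j$, and estimate the remaining line integral in $W^{m,p}_\beta$ by Parseval for $p=2$ or by an operator-valued Mikhlin multiplier theorem for general $p$. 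Your identification of the critical line $\gamma$, the observation that $\lambda^+_j>0>\lambda^-_k$ so that no Jordan chains (hence no $\log r$ terms) occur in the relevant strip, and your diagnosis that the $L^p$ multiplier bound is precisely where \cite{MazPla:1978} enters, are all on target. The one place deserving a line more of care is the initial line of holomorphy for $\widehat v$: from $v\in H^1_0(\Gamma)$ with support in $\sB(r_0)$ one obtains square-integrability of $\widehat v$ on a line $\Re\lambda=1-\tfrac n2$ (the $H^1$ threshold), and the embedding hypothesis $W^{m,p}_\beta\hookrightarrow H^1$ is exactly what ensures this line lies to the left of $\Re\lambda=\gamma$, making the contour shift legitimate.
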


\begin{remark}
\label{R:pos}
It is not difficult to see that 
for $f\in W^{m,p}(\Omega)\cap\gF_\Omega$
the coefficients $c_{j}$ of the singular functions $h^{+}_{j}$ in the decomposition into regular and singular parts of Theorem~\ref{T:regsing} are the same as the coefficients $\mathbf{c}_j(u)$ in the eigenfunction expansion of Theorem~\ref{th:cj}. We can therefore use the representation \eqref{eq:cjdef} for the coefficients of the singular functions. As an application of this observation, we obtain, for example, that if in a neighborhood of the corner, $u$ is non-negative and not identically zero, then the coefficient $\mathbf{c}_{1}(u)$ of the first singular function is non-zero. Indeed, from the maximum principle for harmonic functions follows that for $j=1$ in the integral \eqref{eq:cjdef} both $\widetilde u $ and $\psi_1$ are positive in the interior of $\widehat\Gamma$.
\end{remark}

\section{Dirichlet problem in families of smooth pseudo-corner domains}

\subsection{Families of solutions}
The families $\big\{\Omega_\varepsilon\big\}_{\varepsilon}$ of domains under consideration are
 obtained from a corner domain $\Omega$ in which an $\varepsilon$-neighborhood of the corner $0$ is replaced by the scaled version of a pattern domain $\rP$ coinciding with the regular cone $\Gamma$ at infinity. In detail (see Figure~\ref{F:1} for an illustration), recalling that the corner domain $\Omega$ satisfies $\Omega\cap\sB(r_0) = \Gamma\cap\sB(r_0)$, we introduce the connected pattern domain $\rP\subset\R^n$ with {\em smooth boundary} such that for some $R_0>0$, we have $\rP\cap\sB^\complement(R_0) = \Gamma\cap\sB^\complement(R_0)$.
Then for any $\varepsilon\in[0,\varepsilon_0)$ where $\varepsilon_0=\frac{r_0}{R_0}$, we define $\Omega_\varepsilon$ such that
\[
   \Omega_\varepsilon\cap\sB(\varepsilon R_0) = \varepsilon\rP\cap\sB(\varepsilon R_0)
   \quad\mbox{and}\quad
   \Omega_\varepsilon\cap\sB^\complement(\varepsilon R_0) = 
   \Omega\cap\sB^\complement(\varepsilon R_0).
\]
Denoting by $\sA(\rho,r)$ the annulus of radii $\rho$ and $r$, we note that
\[
   \Omega_\varepsilon\cap\sA(\varepsilon R_0,r_0) = \Gamma\cap\sA(\varepsilon R_0,r_0).
\]
It is clear that  $\Omega_0=\Omega$ and for $\varepsilon>0$, $\Omega_\varepsilon$ has a smooth boundary.

\begin{figure}[h]
\caption{Rounded corner: \ 
(a) Domain $\Omega$, \ 
(b) Pattern $\rP$ , \
(c) $\Omega_{\varepsilon}$ (for $\varepsilon=0.2$) }
\label{F:1}
\hglue-1.5ex
\begin{minipage}{0.32\textwidth}
\centering
\begin{tikzpicture}[x=0.37\textwidth,y=0.37\textwidth]
\filldraw [fill=marin!10,draw=blue,thick](0,-1) 
  to [out=-90,in=-90] (1,-1) 
  to [out=90,in=-45] (1,1) 
  to [out=135,in=0] (-1,1) 
  to [out=180,in=180] (-1,0) 
  -- (0,0) 
  -- (0,-1) ;
\draw[draw=blue](0.0,-0.8) arc(-90:180:0.8) (-0.8,0.0);
\path (-0.8,-0.0) node[anchor=north] {\Bl{$r_{0}$}} ;
\end{tikzpicture} \\
(a)
\end{minipage} \;
\begin{minipage}{0.32\textwidth}
\centering\ \\[2.5ex]
\begin{tikzpicture}[x=0.37\textwidth,y=0.37\textwidth]
\draw[draw=red,thick] (-1.2,0) 
  -- (-0.5,0) to [out=0,in=90]  (0,-0.5) 
  -- (0,-1.2) ;
\fill[pattern=dots,pattern color=red!128] (0,-1.2) 
  -- (1.2,-1.2) 
  -- (1.2,1.2) 
  -- (-1.2,1.2) 
  -- (-1.2,0) 
  -- (-0.5,0) to [out=0,in=90]  (0,-0.5) 
  -- (0,-1.2) ;
\draw[draw=red](0.0,-0.9) arc(-90:180:0.9) (-0.9,0.0);
\path (-0.9,-0.0) node[anchor=north] {\Rd{$R_0$}} ;
\end{tikzpicture} \\[2.5ex]
(b)
\end{minipage} \;
\begin{minipage}{0.32\textwidth}
\centering
\begin{tikzpicture}[x=0.37\textwidth,y=0.37\textwidth]
\fill [fill=marin!10](0,-0.8) -- (0,-1) 
  to [out=-90,in=-90] (1,-1) 
  to [out=90,in=-45] (1,1) 
  to [out=135,in=0] (-1,1) 
  to [out=180,in=180] (-1.0,0.0) -- (-0.18,0) arc(180:-90:0.18) (0.0,-0.18) ;
\draw[draw=blue](0.0,-0.8) arc(-90:180:0.8) (-0.8,0.0);
\fill[pattern=dots,pattern color=red!128](0.0,-0.8) arc(-90:180:0.8) (-0.8,0.0)
  -- (-0.1,0) to [out=0,in=90] (0,-0.1) -- (0.0,-0.8) ;
\draw[draw=red](0.0,-0.18) arc(-90:180:0.18) (-0.18,0.0);
\draw [draw=mmagenta,thick](0,-1) 
  to [out=-90,in=-90] (1,-1) 
  to [out=90,in=-45] (1,1) 
  to [out=135,in=0] (-1,1) 
  to [out=180,in=180] (-1.0,0.0)
  -- (-0.1,0) to [out=0,in=90] (0,-0.1) 
  -- (0.0,-1.0) ;
\path (-0.8,-0.0) node[anchor=north] {\Bl{$r_0$}} ;
\path (-0.25,-0.0) node[anchor=north] {\Rd{$\varepsilon R_0$}} ;
\end{tikzpicture}\\
(c)
\end{minipage}

\end{figure}
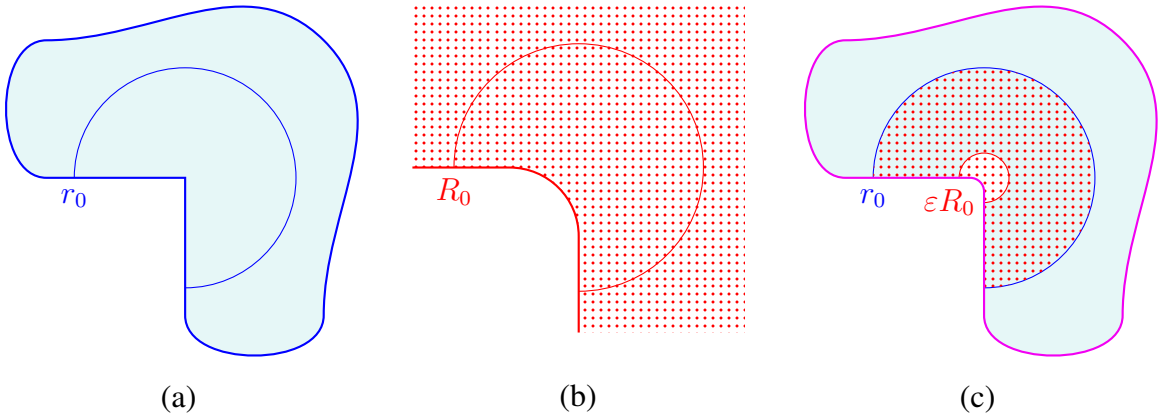

Let $\gF_\Omega$ be the space defined in \eqref{eq:gF}. For $f\in\gF_\Omega$ and $\varepsilon\in[0,\varepsilon_0)$, let $u_\varepsilon$ be the solution of the Dirichlet problem on $\Omega_\varepsilon$:
\begin{equation}
\label{eq:pbeps}
 \begin{cases}
   \begin{array}{rl}
   \Delta u_\varepsilon &= f\on{\Omega_\varepsilon} \text{ in }\Omega_\varepsilon\\
   u_\varepsilon &\in   H^1_0(\Omega_\varepsilon)\,.
  \end{array}
 \end{cases}
\end{equation}
Then $u_0$ is the solution in the limiting corner domain $\Omega$, and for any positive $\varepsilon$,
since each domain $\Omega_\varepsilon$ has a smooth boundary, the regularity shifts as stated in \eqref{eq:Wmp} and \eqref{eq:Hs} apply to $u_\varepsilon$.

\subsection{Representation of solutions as converging series}

Here we reproduce some key elements of the analysis of $u_\varepsilon$ as presented in \cite{CoDa2Mu:2024}. Representations of $u_\varepsilon$ as a converging series are obtained by starting from the  two-scale ``cross-cutoff Ansatz''
\begin{equation}
\label{eq:cross}
  u_\varepsilon(x) = 
  \Phi\Big(\frac{x}{\varepsilon}\Big) \,\sfu[\varepsilon](x)
   + \varphi(x) \,\sfU[\varepsilon]\Big(\frac{x}{\varepsilon}\Big)
\end{equation}
where the cutoff function $\varphi$ is defined in \eqref{eq:phi} and its counterpart $\Phi$ satisfies
$\Phi = 1$  in  $\sB^\complement(2R_0)$ and 
$\Phi = 0$  in $\sB(R_0)$. The unknown functions $\sfu[\varepsilon]$ and $\sfU[\varepsilon]$ are defined in $\Omega$ and $\rP$, respectively. More precisely, $\sfu[\varepsilon]$ belongs to the space $\gE_\Omega$ introduced in \eqref{eq:gF} and $\sfU[\varepsilon]$ to its counterpart $\gE_\rP$. Inserting the Ansatz \eqref{eq:cross} into the problem \eqref{eq:pbeps} leads to the equivalent block form:
\begin{equation}
\label{eq:Meps}
   \cM[\varepsilon]
   \begin{pmatrix} \sfu[\varepsilon] \\[0.3ex] \sfU[\varepsilon] \end{pmatrix}
   = \begin{pmatrix}  f \on \Omega \\ 0 \end{pmatrix}
\end{equation}
where the $2\times2$ block matrix $\cM[\varepsilon]$ has the form
\[
   \cM[\varepsilon] =
   \begin{pmatrix}
   \cM_{\Omega,\Omega} & \cM_{\Omega,\rP}[\varepsilon]\\
   \cM_{\rP,\Omega}[\varepsilon] & \cM_{\rP,\rP}
   \end{pmatrix}.
\]
Here the diagonal terms $\cM_{\Omega,\Omega}$ and $\cM_{\rP,\rP}$ are the Dirichlet Laplace operators in $\Omega$ and $\rP$, respectively, defined on the variational spaces $H^1_0(\Omega)$ and its weighted counterpart $H^1_{\beta,0}(\rP)$ on $\rP$ (with $\beta=0$). The diagonal terms are invertible. The off-diagonal terms $\cM_{\Omega,\rP}[\varepsilon]$ and $\cM_{\rP,\Omega}[\varepsilon]$ are built from commutators of the Laplace operator in slow and rapid variables with $\varphi$ and $\Phi$, respectively. 
Theorem \ref{th:cj} and its counterpart at infinity in $\rP$ allow to prove that the off-diagonal terms $\cM_{\Omega,\rP}[\varepsilon]$ and $\cM_{\rP,\Omega}[\varepsilon]$ expand into series of operators with  powers $\varepsilon^{\lambda^+_j}$ and $\varepsilon^{-\lambda^-_j}$, {\em converging} for $\varepsilon\in[0,\frac{\varepsilon_0}{4})$, see \cite[Th.\,4.11]{CoDa2Mu:2024}.

Composing equation \eqref{eq:Meps} with the inverse of the diagonal of $\cM[\varepsilon]$ yields the reduced equation
\begin{equation}
\label{e:Aeps}
   (\Id + \gA[\varepsilon]) 
   \begin{pmatrix} \sfu[\varepsilon] \\[0.3ex] \sfU[\varepsilon] \end{pmatrix} =
   \begin{pmatrix} u_0 \\[0.3ex] 0 \end{pmatrix}
\end{equation}
in which $u_0=\cM_{\Omega,\Omega}^{-1}f$. There the block matrix $\gA[\varepsilon]$ has null diagonal blocks and expands as
\begin{equation}
\label{eq:Aeps}
   \gA[\varepsilon] = \sum_{\sfe\in\sfE} \varepsilon^\sfe\,\gA_\sfe \quad\mbox{with}\quad
   \sum_{\sfe\in\sfE} \varepsilon^\sfe\, |\me|\me|\gA_\sfe |\me|\me| <\infty,\quad
   \forall\varepsilon \in[0,\frac{\varepsilon_0}{4})
\end{equation}
where the exponent set $\sfE$ is the union of the two sets $\{\lambda^+_j,\;j\ge1\}$ and $\{-\lambda^-_j,\;j\ge1\}$ 
and the norm $|\me|\me|\cdot |\me|\me|$ denotes the operator norm in $H^1_0(\Omega)\times H^1_{\beta,0}(\rP)$.
Then one can write
\begin{equation}
\label{eq:invAeps}
   \begin{pmatrix} \sfu[\varepsilon] \\[0.3ex] \sfU[\varepsilon] \end{pmatrix} =
   (\Id + \gA[\varepsilon])^{-1}
   \begin{pmatrix} u_0 \\[0.3ex] 0 \end{pmatrix}.
\end{equation}
It is proved in \cite[Sec.\,5]{CoDa2Mu:2024} that the inverse $(\Id + \gA[\varepsilon])^{-1}$ written  as a Neumann series 
\[
   (\Id + \gA[\varepsilon])^{-1} = \Id + \sum\nolimits_{k\ge1} (-1)^k(\gA[\varepsilon])^k
\]
makes sense as a formal generalized power series, and as a normally convergent series for positive $\varepsilon<\varepsilon_\star$ with small enough $\varepsilon_\star$. The exponent set attached to this inverse is the {\em monoid} $\sfE^\infty$ generated by $\sfE$: This is the set of $\{0\}$ and all finite sums of elements of $\sfE$. The outcome is that $u_\varepsilon$ is given by expression \eqref{eq:cross} in which $\sfu[\varepsilon]$ and $\sfU[\varepsilon]$ are (normally) convergent series with exponent set $\sfE^\infty$ \cite[Th.\,6.1]{CoDa2Mu:2024}. In particular
\begin{subequations}
we have the converging expansion
\begin{equation}
\label{eq:sfuepsa}
   \sfu[\varepsilon] = \sum_{\sfe\in\sfE^\infty} \varepsilon^\sfe\,\sfu_\sfe,
   \quad \sfu_\sfe\in\gE_\Omega
\end{equation}
satisfying for a constant $C$ independent of $\varepsilon$ and $f$
\begin{equation}
\label{eq:sfuepsb}
   \sum_{\sfe\in\sfE^\infty} \varepsilon^\sfe\,\DNorm{\sfu_\sfe}{H^1(\Omega)} 
   \le C \DNorm{f}{L^2(\Omega)}\quad
   \forall\varepsilon\in[0,\varepsilon_\star).
\end{equation}
\end{subequations}

Rather than directly using the representation \eqref{eq:cross} in which the coefficients $\sfu_\sfe$ depend on the cutoff functions, we will base our further analysis on the related intrinsic ``inner expansion'' given in \cite[Th.\,6.10]{CoDa2Mu:2024}. This statement requires the introduction of the canonical functions in $\rP$, cf \cite[Lemma\,6.3]{CoDa2Mu:2024}, which in spite of their appearance do not depend on the cutoff function $\Phi$ :
\begin{equation}
\label{eq:Kj}
   K^+_j = \Phi h^+_j - \cM_{\rP,\rP}^{-1}\big(\Delta(\Phi h^+_j)\big),\quad j\ge1.
\end{equation}
By construction, the $K^+_j$ are non-zero harmonic functions in $\rP$ satisfying homogeneous Dirichlet conditions on $\partial\rP$. We reproduce \cite[Lemma 6.7]{CoDa2Mu:2024} which we will use in the sequel:

\begin{lemma}
Let $R\ge 2R_0$ and $j\ge1$.
Then $K^+_j$ satisfies the following bounds in $\rP\cap\sB(R)$, with a constant $C=C_{\eqref{eq:H1Kj}}$ independent of $j$ and $R$:
\begin{equation}
\label{eq:H1Kj}
   R \DNorm{\nabla K^+_j}{L^2(\rP\cap\sB(R))} + \DNorm{K^+_j}{L^2(\rP\cap\sB(R))} \le C
   (\lambda^+_j+1)^{1/2}\, R^{\lambda^+_j+\frac{n}{2}} \,.
\end{equation}
\end{lemma}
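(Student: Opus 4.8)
The plan is to control $K^+_j$ by comparing it to the explicit singular function $h^+_j = r^{\lambda^+_j}\psi_j$, using the defining identity \eqref{eq:Kj} and the counterpart of Theorem~\ref{th:cj} at infinity in $\rP$. Write $K^+_j = \Phi h^+_j - w_j$ with $w_j := \cM_{\rP,\rP}^{-1}\big(\Delta(\Phi h^+_j)\big)$. The right-hand side $\Delta(\Phi h^+_j) = [\Delta,\Phi]h^+_j$ is a commutator supported in the annulus $\sB(2R_0)\setminus\sB(R_0)$ (since $h^+_j$ is harmonic), so its $L^2(\rP)$ norm is bounded by $C\|h^+_j\|_{H^1(\sA(R_0,2R_0))}$, which by the homogeneity $h^+_j = r^{\lambda^+_j}\psi_j$ and $\|\psi_j\|_{L^2(\widehat\Gamma)}=1$ is of size $C(\lambda^+_j+1)^{1/2}R_0^{\lambda^+_j+\frac{n}{2}-1}$. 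Since $\cM_{\rP,\rP}^{-1}$ is bounded from $H^{-1}_\beta(\rP)$ (or at least from the subspace of compactly-supported $L^2$ data) into $H^1_{\beta,0}(\rP)$ with $\beta=0$, we get a global bound $\|w_j\|_{H^1(\rP)} \le C(\lambda^+_j+1)^{1/2}$, up to a fixed power of $R_0$ absorbed into the constant.

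First I would record that $K^+_j$ is harmonic in $\rP$ and vanishes on $\partial\rP$, and that in $\rP\cap\sB^\complement(R_0)=\Gamma\cap\sB^\complement(R_0)$ the function $w_j$ is also harmonic (its data is supported in $\sB(2R_0)$, but $w_j$ is harmonic only outside $\sB(2R_0)$; inside the annulus $\sA(R_0,2R_0)$ one still has the global $H^1$ bound above, which suffices). Then in the exterior region I would expand $w_j$ along the eigenbasis $\{\psi_\ell\}$ of $-\Delta^{\sf dir}_{\widehat\Gamma}$: since $w_j\in H^1_0$ and is harmonic for $r>2R_0$, only the decaying modes $r^{\lambda^-_\ell}\psi_\ell$ survive, so $\widetilde w_j(r,\vartheta) = \sum_\ell a_\ell\, r^{\lambda^-_\ell}\psi_\ell(\vartheta)$ for $r\ge 2R_0$. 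The energy identity (the analogue of \eqref{eq:cj} at infinity) together with the $H^1(\rP)$ bound on $w_j$ gives $\sum_\ell |\lambda^-_\ell|\,(2R_0)^{2\lambda^-_\ell+n-2}|a_\ell|^2 \le C(\lambda^+_j+1)$. Because all $\lambda^-_\ell<0$, the weights $(2R_0)^{2\lambda^-_\ell+n-2}$ are bounded below (for $R_0$ fixed), so each $|a_\ell|\le C(\lambda^+_j+1)^{1/2}$, and more usefully $\sum_\ell |a_\ell|^2 R^{2\lambda^-_\ell}$ decreases in $R$; evaluating the tail at radius $R\ge 2R_0$ yields $\|w_j\|_{L^2(\sA(2R_0,R))}^2 \le C(\lambda^+_j+1)R^{n}$ and $\|\nabla w_j\|_{L^2(\sA(2R_0,R))}^2 \le C(\lambda^+_j+1)R^{n-2}$, hence $R\|\nabla w_j\|_{L^2(\rP\cap\sB(R))} + \|w_j\|_{L^2(\rP\cap\sB(R))} \le C(\lambda^+_j+1)^{1/2}R^{n/2} \le C(\lambda^+_j+1)^{1/2}R^{\lambda^+_j+n/2}$ since $\lambda^+_j>0$ and $R\ge 2R_0\ge 1$ may be assumed (or a fixed power of $R_0$ absorbed).

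It remains to bound the other piece $\Phi h^+_j$ in $\rP\cap\sB(R)$: it is supported in $\sB^\complement(R_0)$, equals $h^+_j=r^{\lambda^+_j}\psi_j$ there, so a direct computation in polar coordinates using $\|\psi_j\|_{L^2}=1$ and $\|\nabla_{\bS^{n-1}}\psi_j\|_{L^2}^2=\mu_j \sim (\lambda^+_j)^2$ gives $\|\Phi h^+_j\|_{L^2(\sB(R))}^2 \le C\int_{R_0}^R r^{2\lambda^+_j+n-1}\,dr \le C\frac{R^{2\lambda^+_j+n}}{\lambda^+_j+1}$ and $\|\nabla(\Phi h^+_j)\|_{L^2(\sB(R))}^2 \le C(\lambda^+_j+1)R^{2\lambda^+_j+n-2}$, which together give exactly the claimed form with the factor $(\lambda^+_j+1)^{1/2}R^{\lambda^+_j+n/2}$. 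Adding the two contributions and using the triangle inequality in \eqref{eq:Kj} finishes the proof, with $C$ depending only on $n$, $\Gamma$, $R_0$ and the cutoff $\Phi$, but not on $j$ or $R$.

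The main obstacle is making the exterior eigenfunction expansion of $w_j$ rigorous and tracking the $j$-dependence of constants cleanly: one must ensure that the bound on $\cM_{\rP,\rP}^{-1}$ and the energy identity at infinity are uniform in $j$ (no hidden dependence on the eigenvalue $\mu_j$ beyond the explicit $(\lambda^+_j+1)^{1/2}$), and that the lower bound $|\lambda^-_\ell|\ge|\lambda^-_1|>0$ together with $\inf_\ell (2R_0)^{2\lambda^-_\ell}>0$ is genuinely available — this is where the hypotheses $R\ge 2R_0$ and the spectral gap \eqref{eq:muj} enter. Everything else is a routine homogeneity computation.
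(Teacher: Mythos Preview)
The paper does not prove this lemma here; it is quoted from \cite{CoDa2Mu:2024}, so there is no in-paper argument to compare against. Your overall strategy --- split $K^+_j=\Phi h^+_j-w_j$, bound $\Phi h^+_j$ directly by homogeneity in polar coordinates, and control $w_j=\cM_{\rP,\rP}^{-1}\big([\Delta,\Phi]h^+_j\big)$ through the variational bound on $\rP$ --- is the natural one, and your computation for the $\Phi h^+_j$ piece is correct.

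There is, however, one concrete slip in the treatment of $w_j$. You correctly obtain $\DNorm{[\Delta,\Phi]h^+_j}{L^2}\le C(\lambda^+_j+1)^{1/2}R_0^{\lambda^+_j+\frac{n}{2}-1}$, but then claim $\DNorm{w_j}{H^1(\rP)}\le C(\lambda^+_j+1)^{1/2}$ ``up to a fixed power of $R_0$ absorbed into the constant''. The exponent $\lambda^+_j+\tfrac{n}{2}-1$ is \emph{not} fixed: it grows with $j$, so if $2R_0>1$ the factor $(2R_0)^{\lambda^+_j}$ blows up in $j$ and cannot be hidden in a $j$-independent constant. The repair is simply to keep the factor $(2R_0)^{\lambda^+_j+\frac{n}{2}-1}$ explicit throughout and, only at the very end, invoke the hypothesis $R\ge 2R_0$ (together with $\lambda^+_j+\tfrac{n}{2}-1>0$) to write $(2R_0)^{\lambda^+_j+\frac{n}{2}-1}\le R^{\lambda^+_j+\frac{n}{2}-1}$. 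This is in fact the only place where the assumption $R\ge 2R_0$ is genuinely needed. Once this is done, the global weighted bound $\DNorm{\nabla w_j}{L^2(\rP)}+\DNorm{r^{-1}w_j}{L^2(\rP)}\le C(\lambda^+_j+1)^{1/2}(2R_0)^{\lambda^+_j+\frac{n}{2}-1}$ already yields, via $\DNorm{w_j}{L^2(\rP\cap\sB(R))}\le R\,\DNorm{r^{-1}w_j}{L^2(\rP)}$, the required estimate $R\DNorm{\nabla w_j}{L^2(\rP\cap\sB(R))}+\DNorm{w_j}{L^2(\rP\cap\sB(R))}\le C(\lambda^+_j+1)^{1/2}R^{\lambda^+_j+\frac{n}{2}}$; your exterior eigenfunction expansion of $w_j$ is then unnecessary for the lemma as stated.
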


The inner expansion can be stated as
\begin{theorem}\label{th:uepsinner} 
Let $f\in\gF_\Omega$. Then the solution $u_\varepsilon$ of problem \eqref{eq:pbeps} is given in $\Omega_\varepsilon\cap\sB(r_0/2)$ by the following expansion converging in the $H^1$ norm for all $0<\varepsilon<\varepsilon_\star$
\begin{equation}\label{uepsinner.eq1}
   u_\varepsilon(x) = 
   \sum_{j\ge 1} \sum_{\sfe\in\sfE^\infty} \varepsilon^{\sfe+\lambda^+_j}
   \mathbf{c}_j(\sfu_\sfe)\, K^+_j\!\left(\frac{x}{\varepsilon}\right)\,,
   \quad  x\in\Omega_\varepsilon\cap\sB(r_0/2).
\end{equation}
Here, $\sfu_\sfe$ comes from \eqref{eq:sfuepsa}, $\mathbf{c}_j$ from Theorem \ref{th:cj}, and $K^+_j$ from \eqref{eq:Kj}.
\end{theorem}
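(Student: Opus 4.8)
The plan is to read \eqref{uepsinner.eq1} off the cross-cutoff Ansatz \eqref{eq:cross}, restricted to the inner ball, by recognizing the combination of the rescaled slow profile $\sfu[\varepsilon]$ with the fast profile $\sfU[\varepsilon]$ as a series in the rescaled canonical functions $K^+_j(\cdot/\varepsilon)$. First I would localize: on $\Omega_\varepsilon\cap\sB(r_0/2)$ one has $\varphi\equiv1$ and $\Omega_\varepsilon=\varepsilon\rP$, so \eqref{eq:cross} reduces to $u_\varepsilon(x)=\Phi(x/\varepsilon)\,\sfu[\varepsilon](x)+\sfU[\varepsilon](x/\varepsilon)$, and $\Phi(\cdot/\varepsilon)$ is supported in $\{|x|\ge\varepsilon R_0\}$, where $\Omega_\varepsilon$ already coincides with the cone $\Gamma$. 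Since $\sfu[\varepsilon]\in\gE_\Omega$, Theorem~\ref{th:cj} gives $\sfu[\varepsilon]=\sum_{j\ge1}\mathbf{c}_j(\sfu[\varepsilon])\,h^+_j$ in $H^1(\Gamma\cap\sB(r_0/2))$, and since \eqref{eq:cj} exhibits each $\mathbf{c}_j$ as a bounded functional on $\gE_\Omega$ for the $H^1$ norm, the expansion \eqref{eq:sfuepsa} propagates to $\mathbf{c}_j(\sfu[\varepsilon])=\sum_{\sfe\in\sfE^\infty}\varepsilon^\sfe\,\mathbf{c}_j(\sfu_\sfe)$. Using the homogeneity $h^+_j(x/\varepsilon)=\varepsilon^{-\lambda^+_j}h^+_j(x)$ on the support of $\Phi(\cdot/\varepsilon)$, the definition \eqref{eq:Kj} rewrites, after the substitution $x/\varepsilon$, as
\[
   \varepsilon^{\lambda^+_j}K^+_j(x/\varepsilon)=\Phi(x/\varepsilon)\,h^+_j(x)-\varepsilon^{\lambda^+_j}v_j(x/\varepsilon),
   \qquad v_j:=\cM_{\rP,\rP}^{-1}\big(\Delta(\Phi h^+_j)\big),
\]
so multiplying by $\mathbf{c}_j(\sfu[\varepsilon])$ and summing over $j$ shows that $\Phi(x/\varepsilon)\,\sfu[\varepsilon](x)$ equals $\sum_j\varepsilon^{\lambda^+_j}\mathbf{c}_j(\sfu[\varepsilon])\,K^+_j(x/\varepsilon)$ plus the correction $\sum_j\varepsilon^{\lambda^+_j}\mathbf{c}_j(\sfu[\varepsilon])\,v_j(x/\varepsilon)$.

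The step I expect to be the main obstacle is showing that this correction is exactly cancelled by the fast profile, i.e.\ that $\sfU[\varepsilon]=-\sum_j\varepsilon^{\lambda^+_j}\mathbf{c}_j(\sfu[\varepsilon])\,v_j$. This is where one must use the structure of the block system \eqref{eq:Meps}: its second row reads $\cM_{\rP,\rP}\sfU[\varepsilon]=-\cM_{\rP,\Omega}[\varepsilon]\sfu[\varepsilon]$, and $\cM_{\rP,\Omega}[\varepsilon]$ is a commutator of the Laplacian with $\Phi(\cdot/\varepsilon)$ that probes $\sfu[\varepsilon]$ only in the thin annulus where the eigenfunction expansion above is valid; applied termwise to $h^+_j$ and rescaled it produces $\varepsilon^{\lambda^+_j}\Delta(\Phi h^+_j)$, so inverting $\cM_{\rP,\rP}$ yields the claimed identity. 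This is in essence \cite[Lemma~6.3]{CoDa2Mu:2024}, which I would quote; a self-contained argument would have to spell out the commutator calculus and the scaling conventions hidden in $\cM_{\rP,\Omega}[\varepsilon]$, and to check that the series for $\sfU[\varepsilon]$ converges in $H^1_{\beta,0}(\rP)$ — which it does for $\varepsilon<\varepsilon_0/4$ by Cauchy--Schwarz using \eqref{eq:cj} and the fact that $\Delta(\Phi h^+_j)$ is supported in $\sA(R_0,2R_0)$. Granting the cancellation, the last display of the previous paragraph gives $u_\varepsilon(x)=\sum_j\varepsilon^{\lambda^+_j}\mathbf{c}_j(\sfu[\varepsilon])\,K^+_j(x/\varepsilon)$ on $\Omega_\varepsilon\cap\sB(r_0/2)$, and inserting $\mathbf{c}_j(\sfu[\varepsilon])=\sum_\sfe\varepsilon^\sfe\mathbf{c}_j(\sfu_\sfe)$ yields \eqref{uepsinner.eq1}.

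It remains to justify convergence. Fixing $\rho<r_0/2$ together with an intermediate radius $\rho'\in(\rho,r_0/2)$, I would scale $x\mapsto x/\varepsilon$ and use \eqref{eq:H1Kj} with $R=\rho/\varepsilon$ to get $\DNorm{K^+_j(\cdot/\varepsilon)}{H^1(\Omega_\varepsilon\cap\sB(\rho))}\lesssim(\lambda^+_j+1)^{1/2}\,\rho^{\lambda^+_j+\frac n2-1}\,\varepsilon^{-\lambda^+_j}$; the factor $\varepsilon^{-\lambda^+_j}$ cancels the $\varepsilon^{\lambda^+_j}$ weighting $K^+_j$, and a Cauchy--Schwarz in $j$ against the weighted $\ell^2$ identity \eqref{eq:cj} at radius $\rho'$ bounds $\sum_j\varepsilon^{\lambda^+_j}|\mathbf{c}_j(\sfu_\sfe)|\,\DNorm{K^+_j(\cdot/\varepsilon)}{H^1(\Omega_\varepsilon\cap\sB(\rho))}$ by $\DNorm{\nabla\sfu_\sfe}{L^2(\Omega)}$ times $\big(\sum_j\frac{\lambda^+_j+1}{\lambda^+_j}(\rho/\rho')^{2\lambda^+_j+n-2}\big)^{1/2}$, which is finite since $\rho<\rho'$ and $\lambda^+_j\to\infty$ (the $\mu_j$ being unbounded). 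Summing over $\sfe$ with \eqref{eq:sfuepsb} then gives normal convergence of the double series in $H^1(\Omega_\varepsilon\cap\sB(\rho))$, with bound $C\DNorm{f}{L^2(\Omega)}$, for every $\rho<r_0/2$. The geometric gain $(\rho/\rho')^{2\lambda^+_j}$ is essential: at the endpoint $\rho=r_0/2$ this crude estimate only just diverges, and recovering convergence in $H^1(\Omega_\varepsilon\cap\sB(r_0/2))$ itself would require a finer, near-orthogonality estimate of the $K^+_j(\cdot/\varepsilon)$ in place of the triangle inequality.
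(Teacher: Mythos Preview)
The paper does not give its own proof of this statement; Theorem~\ref{th:uepsinner} is quoted verbatim from \cite[Th.\,6.10]{CoDa2Mu:2024}, and the surrounding text only recalls the ingredients (the cross-cutoff Ansatz \eqref{eq:cross}, the block system \eqref{eq:Meps}, the canonical functions \eqref{eq:Kj}) needed to state it. There is therefore no in-paper argument to compare yours against.

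That said, your outline is consistent with those ingredients and with the structure of the proof in \cite{CoDa2Mu:2024}. The localization step is correct; on $\sB(r_0/2)$ one has $\varphi\equiv1$ and $\Omega_\varepsilon\cap\sB(r_0/2)=\varepsilon\rP\cap\sB(r_0/2)$ for $\varepsilon<\varepsilon_0$. The crucial cancellation $\sfU[\varepsilon]=-\sum_j\varepsilon^{\lambda^+_j}\mathbf{c}_j(\sfu[\varepsilon])\,\cM_{\rP,\rP}^{-1}\Delta(\Phi h^+_j)$ is indeed exactly what the second row of \eqref{eq:Meps} gives once one uses that the commutator $[\Delta_x,\Phi(\cdot/\varepsilon)]$ is supported in the conical annulus $\varepsilon R_0\le|x|\le 2\varepsilon R_0$, where $\sfu[\varepsilon]$ admits the eigenfunction expansion of Theorem~\ref{th:cj}, and the homogeneity $h^+_j(x)=\varepsilon^{\lambda^+_j}h^+_j(x/\varepsilon)$; the rescaling you describe produces precisely $\varepsilon^{\lambda^+_j}\Delta(\Phi h^+_j)$ in the fast variable. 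Your scaling of \eqref{eq:H1Kj} and the Cauchy--Schwarz step against \eqref{eq:cj} are correct and yield normal convergence of the double series in $H^1(\Omega_\varepsilon\cap\sB(\rho))$ for every $\rho<r_0/2$.

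The one point you leave open is the endpoint $\rho=r_0/2$, and you diagnose the reason correctly: the triangle inequality in $j$ loses the orthogonality of the $h^+_j$. The fix is to observe that on the annulus $\sA(2\varepsilon R_0,r_0/2)$ one has $\Phi(\cdot/\varepsilon)\equiv1$, hence $\sum_j\varepsilon^{\lambda^+_j}\mathbf{c}_j(\sfu[\varepsilon])K^+_j(\cdot/\varepsilon)$ splits there into $\sum_j\mathbf{c}_j(\sfu[\varepsilon])h^+_j$, whose $H^1(\Gamma\cap\sB(r_0/2))$-convergence is already granted by Theorem~\ref{th:cj}, plus the fast remainder $\sfU[\varepsilon](\cdot/\varepsilon)$, whose convergence is controlled in $H^1_{\beta,0}(\rP)$ by the Neumann-series estimate behind \eqref{eq:sfuepsb}. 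On the inner piece $\rP\cap\sB(2R_0)$ (rescaled) your crude bound already suffices. This is the ``near-orthogonality'' you allude to, and it closes the gap.
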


Note that $u_{0}=\sfu_{0}$, that is, $u_{\varepsilon}$ for $\varepsilon=0$ coincides with $\sfu_{\sfe}$ for $\sfe=0$, because both satisfy the same Dirichlet problem \eqref{eq:pbeps} on $\Omega_{0}=\Omega$.

\section{Blow-up of Sobolev norms in smooth pseudo-corner domains}
\label{S:smoothpseudo}

Let us recall that the spherical cap $\widehat\Gamma$ of the cone $\Gamma$ is supposed to be connected, and has a smooth boundary in $\bS^{n-1}$. The pattern domain $\rP$ is assumed to be connected and to have a smooth boundary. As we include the case where $\Gamma$ is a half-space (hence is smooth!) we complete these assumptions by the requirement that $\rP$ does not coincide with $\Gamma$. 

Choose $R'_0:=\frac{3}{2}R_0$ and set
\begin{equation}
\label{eq:Q}
   \rQ := \rP\cap\sB(R'_0)
\end{equation}
so that $\rQ$ is certainly non-empty and is sitting at the location of the perturbation.

\subsection{Integer order Sobolev spaces}
As usual the seminorm and norm in the space $W^{m,p}(\cU)$ are defined according to
\[
   \Normc{v}{W^{m,p}(\sU)}{p} = \sum_{|\alpha|=m} \DNormc{\partial^\alpha v}{L^p(\sU)}{p}
   \quad\mbox{and}\quad
   \DNormc{v}{W^{m,p}(\sU)}{p} = \sum_{|\alpha|\le m} \DNormc{\partial^\alpha v}{L^p(\sU)}{p}\,.
\]

\begin{theorem}
\label{th:Wmp}
Let $p>1$ and $m\ge2$ with $m-\frac{n}{p}>1-\frac{n}{2}$ so that $W^{m,p}(\Omega_\varepsilon)\subset H^1(\Omega_\varepsilon)$. Let $f\in\gF_\Omega\cap W^{m-2,p}(\Omega)$. Then for any $\varepsilon\in(0,\varepsilon_0)$, the solution $u_\varepsilon$ of problem \eqref{eq:pbeps} belongs to $W^{m,p}(\Omega_\varepsilon)$. Moreover, for any $\varepsilon\in(0,\varepsilon_\star]$ it 
satisfies the inequality
\begin{equation}
\label{eq:ueWmpQ}
   \Norm{u_\varepsilon}{W^{m,p}(\varepsilon\rQ)} \ge 
   \varepsilon^{\lambda^+_1+\frac{n}{p}-m} \left( 
   |\mathbf{c}_1(u_0)|\, \Norm{K^+_1}{W^{m,p}(\rQ)}
   - \gamma_0\,\varepsilon^{\lambda'} \DNorm{f}{L^2(\Omega)}
   \right)
\end{equation}
where
\begin{itemize}
\item[{\em(i)}] $\lambda'=\min\{\lambda^+_1,\lambda^+_2-\lambda^+_1\}>0$,
\item[{\em(ii)}] $\Norm{K^+_1}{W^{m,p}(\rQ)}>0$,
\item[{\em(iii)}] $\gamma_0$ is independent of $f$ and $\varepsilon$.
\end{itemize} 
The lower bound \eqref{eq:ueWmpQ} implies a blow-up of the seminorm of $u_\varepsilon$ in $W^{m,p}(\Omega_\varepsilon)$ if the two following conditions \eqref{eq:c1} and \eqref{eq:lam1} are satisfied:
\begin{equation}
\label{eq:c1}
   \mathbf{c}_1(u_0) \neq0
\end{equation} 
and
\begin{equation}
\label{eq:lam1}
   \lambda^+_1<m-\frac{n}{p}\,.
\end{equation}
\end{theorem}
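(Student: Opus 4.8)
The plan is to extract the leading term of the inner expansion \eqref{uepsinner.eq1} and treat everything else as a remainder. First I would verify the claimed $W^{m,p}$-regularity of $u_\varepsilon$: for $\varepsilon>0$ the domain $\Omega_\varepsilon$ has smooth boundary, $f\in W^{m-2,p}(\Omega_\varepsilon)$ because $f\in W^{m-2,p}(\Omega)$ and $\Omega$, $\Omega_\varepsilon$ agree outside $\sB(\varepsilon R_0)$ while $f$ vanishes on $\sB(r_0/2)\supset\sB(\varepsilon R_0)$ for $\varepsilon<\varepsilon_0$; so the shift \eqref{eq:Wmp} gives $u_\varepsilon\in W^{m,p}(\Omega_\varepsilon)$. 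Next, on $\varepsilon\rQ=\varepsilon\rP\cap\sB(\varepsilon R'_0)\subset\Omega_\varepsilon\cap\sB(r_0/2)$ the expansion \eqref{uepsinner.eq1} holds, so I write $u_\varepsilon(x)=\varepsilon^{\lambda^+_1}\mathbf{c}_1(\sfu_0)\,K^+_1(x/\varepsilon)+r_\varepsilon(x)$ where $\mathbf{c}_1(\sfu_0)=\mathbf{c}_1(u_0)$ (since $\sfu_0=u_0$) and $r_\varepsilon$ collects all terms with exponent $\sfe+\lambda^+_j>\lambda^+_1$. By the scaling $v\mapsto v(\cdot/\varepsilon)$ one has, for any homogeneous-degree-$m$ seminorm, $\Norm{K^+_1(\cdot/\varepsilon)}{W^{m,p}(\varepsilon\rQ)}=\varepsilon^{\frac{n}{p}-m}\Norm{K^+_1}{W^{m,p}(\rQ)}$, which produces the prefactor $\varepsilon^{\lambda^+_1+\frac{n}{p}-m}$. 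The reverse triangle inequality then gives $\Norm{u_\varepsilon}{W^{m,p}(\varepsilon\rQ)}\ge\varepsilon^{\lambda^+_1+\frac{n}{p}-m}|\mathbf{c}_1(u_0)|\,\Norm{K^+_1}{W^{m,p}(\rQ)}-\Norm{r_\varepsilon}{W^{m,p}(\varepsilon\rQ)}$, and it remains to bound the remainder term.

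For the remainder, the key point is that every exponent occurring in $r_\varepsilon$ is at least $\lambda^+_1+\lambda'$ with $\lambda'=\min\{\lambda^+_1,\lambda^+_2-\lambda^+_1\}$: the $j=1$ part contributes exponents $\sfe+\lambda^+_1$ with $\sfe\in\sfE^\infty\setminus\{0\}$, and the smallest nonzero element of the monoid $\sfE^\infty$ is $\min\sfE=\lambda^+_1$ (using $\lambda^+_1\le-\lambda^-_j$, which holds because $\lambda^+_1=-\lambda^-_1\le-\lambda^-_j$ for $j\ge1$ by \eqref{eq:muj}); the $j\ge2$ part contributes exponents $\ge\lambda^+_2$. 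So $r_\varepsilon(x)=\varepsilon^{\lambda^+_1+\lambda'}\sum_{k}\varepsilon^{\sfe_k}\mathbf{c}_{j_k}(\sfu_{\sfe_k})K^+_{j_k}(x/\varepsilon)$ with the residual exponents still summing against an $\ell^1$-type bound. Applying the scaling again, $\Norm{r_\varepsilon}{W^{m,p}(\varepsilon\rQ)}\le\varepsilon^{\lambda^+_1+\lambda'+\frac{n}{p}-m}\sum_k\varepsilon^{\sfe_k}|\mathbf{c}_{j_k}(\sfu_{\sfe_k})|\,\Norm{K^+_{j_k}}{W^{m,p}(\rQ)}$, and this is exactly where I need a uniform bound on $\Norm{K^+_j}{W^{m,p}(\rQ)}$ in terms of $j$ (polynomial in $\lambda^+_j$), on $|\mathbf{c}_j(\sfu_\sfe)|\lesssim\DNorm{\sfu_\sfe}{H^1(\Omega)}$ (from \eqref{eq:cj} with $\rho=r_0/2$ fixed), and on $\sum_\sfe\varepsilon^\sfe\DNorm{\sfu_\sfe}{H^1(\Omega)}\le C\DNorm{f}{L^2(\Omega)}$ from \eqref{eq:sfuepsb}. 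Interior elliptic estimates on the bounded smooth-boundary region $\rQ\Subset\rP$ give $\Norm{K^+_j}{W^{m,p}(\rQ)}\le C\big(\DNorm{K^+_j}{L^2(\rP\cap\sB(R_0''))}+\DNorm{\Delta K^+_j}{\cdots}\big)$ and since $K^+_j$ is harmonic in $\rP$ the Laplacian term vanishes; then \eqref{eq:H1Kj} with a fixed $R\ge2R_0$ bounds this by $C(\lambda^+_j+1)^{1/2}R^{\lambda^+_j+n/2}$, a polynomial-times-geometric factor in $j$. Combining, $\sum_k\varepsilon^{\sfe_k}|\mathbf{c}_{j_k}(\sfu_{\sfe_k})|\Norm{K^+_{j_k}}{W^{m,p}(\rQ)}\le\gamma_0\DNorm{f}{L^2(\Omega)}$ for $\varepsilon\le\varepsilon_\star$ (shrinking $\varepsilon_\star$ if needed so the geometric factors are absorbed), with $\gamma_0$ independent of $f$ and $\varepsilon$; this yields \eqref{eq:ueWmpQ}.

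Finally, for the blow-up conclusion: $\varepsilon\rQ\subset\Omega_\varepsilon$ so $\Norm{u_\varepsilon}{W^{m,p}(\Omega_\varepsilon)}\ge\Norm{u_\varepsilon}{W^{m,p}(\varepsilon\rQ)}$, and under \eqref{eq:c1}–\eqref{eq:lam1} the bracket in \eqref{eq:ueWmpQ} tends to $|\mathbf{c}_1(u_0)|\Norm{K^+_1}{W^{m,p}(\rQ)}>0$ as $\varepsilon\to0$ while the exponent $\lambda^+_1+\frac{n}{p}-m<0$, so the right-hand side blows up. I still owe two small facts: (ii) $\Norm{K^+_1}{W^{m,p}(\rQ)}>0$, i.e.\ $K^+_1$ is not a polynomial of degree $<m$ on $\rQ$ — this follows because $K^+_1$ is a nonzero harmonic function on $\rP$ vanishing on $\partial\rP$, and if it agreed with a low-degree polynomial on the open set $\rQ$ it would, by analyticity, be that polynomial on all of $\rP$, contradicting either the growth $K^+_1\sim r^{\lambda^+_1}\psi_1$ at infinity when $\lambda^+_1$ is not an even integer, or (in the exceptional polynomial-exponent case) the requirement $\rP\ne\Gamma$ together with unique continuation from the boundary; and (i) $\lambda'>0$, immediate from $\lambda^+_1>0$ and $\lambda^+_2>\lambda^+_1$ (strict by \eqref{eq:muj}). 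The main obstacle is the uniform-in-$j$ control of the remainder: making rigorous that the reindexed double series $\sum_j\sum_\sfe$ can be summed after the scaling, i.e.\ that the $j$-dependent growth of $\Norm{K^+_j}{W^{m,p}(\rQ)}$ is beaten by the decay of $\varepsilon^{\lambda^+_j}$ and of the coefficients coming from \eqref{eq:sfuepsb}, uniformly for $\varepsilon\le\varepsilon_\star$ — this is essentially a repackaging of the normal convergence from \cite{CoDa2Mu:2024}, but it has to be checked that passing from the $H^1$ norm on $\Omega_\varepsilon\cap\sB(r_0/2)$ to the $W^{m,p}$ norm on the fixed region $\rQ$ (via interior estimates and the fixed evaluation radius in $\mathbf{c}_j$) does not destroy the summability.
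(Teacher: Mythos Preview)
Your proposal is correct and follows the same strategy as the paper: extract the $(j,\sfe)=(1,0)$ term from the inner expansion, rescale, and bound the remainder using interior elliptic estimates for the $K^+_j$, the coefficient bound \eqref{eq:cj}, and the normal convergence \eqref{eq:sfuepsb}; the paper organizes the remainder as $\Sigma_1$ ($j=1$, $\sfe\neq0$) plus $\Sigma_2$ ($j\ge2$) and handles the summability you correctly flag as ``the main obstacle'' via two short lemmas, the second of which uses Weyl's law and the restriction $\varepsilon<r_0/(4R_0)$ to absorb exactly the geometric factor $(4R_0/r_0)^{\lambda^+_j}$ you anticipate. One small correction: your justification $\lambda^+_1=-\lambda^-_1$ is valid only for $n=2$, since in general $\lambda^+_j+\lambda^-_j=2-n$; the conclusion $\lambda^+_1\le-\lambda^-_j$ you need still holds, because $1-\tfrac{n}{2}\le\tfrac{n}{2}-1$ for $n\ge2$.
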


\begin{remark}
Condition \eqref{eq:lam1} implies that if $\mathbf{c}_1(u_0) \neq0$, the solution $u_0$ of the limit problem does not belong to $W^{m,p}(\Omega)$.
\end{remark}

\begin{remark}
The region $\varepsilon\rQ$ is a boundary layer (in the form of a corner layer) that is compactly embedded in the region where $f$ is assumed to be $0$. Hence the regularity of $u_\varepsilon$ in $\varepsilon\rQ$ is independent of the regularity of $f$ elsewhere.
Indeed, following some of the lines of the proof of the lower bound \eqref{eq:ueWmpQ}, one can also find an upper bound
\begin{equation}
\label{eq:ueWmpQu}
   \Norm{u_\varepsilon}{W^{m,p}(\varepsilon\rQ)} \le 
   \varepsilon^{\lambda^+_1+\frac{n}{p}-m} \left( 
   |\mathbf{c}_1(u_0)|\, \Norm{K^+_1}{W^{m,p}(\rQ)}
   + \gamma_0\,\varepsilon^{\lambda'} \DNorm{f}{L^2(\Omega)}
   \right).
\end{equation}
This is true even if $f$ has only $L^{2}$ regularity.

On the other hand, the condition \eqref{eq:c1} leading to blow-up  can occur for $f\in\gF_\Omega\cap\sC^\infty(\overline\Omega)$. An example is $f=\Delta(\varphi h^+_1)$, cf Lemma \ref{lem:2.1}.
\end{remark}

\begin{remark}
The blow-up estimate \eqref{eq:ueWmpQ} is valid including in the case when $\Gamma$ is a half-space. In this case, $\lambda^+_1=1$ and $\lambda^+_2=2$, and the inequality \eqref{eq:ueWmpQ} becomes
\[
   \Norm{u_\varepsilon}{W^{m,p}(\varepsilon\rQ)} \ge 
   \varepsilon^{1+\frac{n}{p}-m} \left( 
   |\mathbf{c}_1(u_0)|\, \Norm{K^+_1}{W^{m,p}(\rQ)}
   - \gamma_0\,\varepsilon \DNorm{f}{L^2(\Omega)}
   \right)\,.
\]
Let $\Gamma$ be the half-space $\{x_{n}>0\}$. Then the ``singular functions'' $h_{j}^{+}$ are harmonic polynomials in $(x_{1},\dots,x_{n})$ vanishing at $x_{n}=0$ with $h_{1}^{+}(x)=x_{n}$, and the expansion of Theorem~\ref{th:cj} is simply the Taylor expansion of a harmonic function vanishing at $x_{n}=0$.
Therefore condition \eqref{eq:c1} is the local condition $\partial_nu_0(O)\neq0$.
We can summarize the conclusion of Theorem \ref{th:Wmp} for this case: A self-similar perturbation around a point $O$ in a flat part of the boundary gives rise to a blow-up of $W^{m,p}$ norms when $p>n/(m-1)$
as soon as the normal derivative of $u_{0}$ at $O$ does not vanish.
\end{remark}

\begin{proof}[Proof of Theorem \ref{th:Wmp}.]\mbox{ }\\
{\em Step 1.} Use Theorem \ref{th:uepsinner} to write
\[
   \Norm{u_\varepsilon}{W^{m,p}(\varepsilon\rQ)} \ge 
   \varepsilon^{\lambda^+_1}
   |\mathbf{c}_1(\sfu_0)|\, 
   \Norm{K^+_1\big(\tfrac{\cdot}{\varepsilon}\big)}{W^{m,p}(\varepsilon\rQ)} -
   \underbrace{\sum_{j\ge 1} \sum_{\sfe\in\sfE^\infty}}_{(j,\sfe)\neq(1,0)} 
   \varepsilon^{\sfe+\lambda^+_j} 
   |\mathbf{c}_j(\sfu_\sfe)|\, 
   \Norm{K^+_j\big(\tfrac{\cdot}{\varepsilon}\big)}{W^{m,p}(\varepsilon\rQ)}.
\]
Note that we have for all $j\ge1$ by change of variables
\begin{equation}
\label{eq:varch}
   \Norm{K^+_j\big(\tfrac{\cdot}{\varepsilon}\big)}{W^{m,p}(\varepsilon\rQ)} =
   \varepsilon^{-m+\frac{n}{p}}
   \Norm{K^+_j}{W^{m,p}(\rQ)}
\end{equation}
hence the lower bound of $\Norm{u_\varepsilon}{W^{m,p}(\varepsilon\rQ)}$ by
\begin{equation}
\label{eq:lb}
   \varepsilon^{\lambda^+_1-m+\frac{n}{p}}
   |\mathbf{c}_1(\sfu_0)|\, 
   \Norm{K^+_1}{W^{m,p}(\rQ)}  -
   \underbrace{\sum_{j\ge 1} \sum_{\sfe\in\sfE^\infty}}_{(j,\sfe)\neq(1,0)} 
   \varepsilon^{\sfe+\lambda^+_j-m+\frac{n}{p}} 
   |\mathbf{c}_j(\sfu_\sfe)|\, 
   \Norm{K^+_j}{W^{m,p}(\rQ)}.
\end{equation}

{\em Step 2.} Let us prove that the coefficient $\Norm{K^+_1}{W^{m,p}(\rQ)}$ occurring in \eqref{eq:lb} is positive. 
Would this seminorm be zero, the harmonic function $K^+_1$ would be an homogeneous polynomial in $\rQ$, hence in $\rP$ by connectedness. Hence its nodal set would be homogeneous, thus form a cone, which contradicts the assumption that $\rP\neq\Gamma$.

{\em Step 3.} Let us prove a uniform upper bound for $\Norm{K^+_j}{W^{m,p}(\rQ)}$.
For this we rely on classical elliptic a priori estimates in nested subdomains of a domain with smooth boundary: Let
\[
   0<\oR<\oR'.
\]
Then there exists a constant $C=C[\rP,\oR,\oR',m,p]$ such that for any $u\in H^1(\rP\cap\sB(\oR'))$ satisfying Dirichlet boundary condition $u=0$ on $\partial\rP\cap\sB(\oR')$
\begin{equation}
\label{estBRR'}
   \Norm{u}{W^{m,p}(\rP\cap\sB(\oR))} \le 
   C\big( \DNorm{\Delta u}{W^{m-2,p}(\rP\cap\sB(\oR'))} 
   + \DNorm{u}{H^{1}(\rP\cap\sB(\oR'))} \big)\,.
\end{equation}
Choose $\oR=R'_0=\frac{3}{2}R_0$ and $\oR'=2R_0$.
We note that $\Delta K^+_j=0$ and $K^+_j=0$ on $\partial\rP$ by construction. 
Hence \eqref{estBRR'} implies the estimates
\begin{equation}
\label{eq:estKj}
   \Norm{K^+_j}{W^{m,p}(\rQ)} \le 
   C \DNorm{K^+_j}{H^{1}(\rP\cap\sB(2R_0))} 
\end{equation}
where the constant $C=C_{\eqref{eq:estKj}}$ {\em does not depend on $j$}. Using the control of the $H^1$ norms given by \eqref{eq:H1Kj} for $R=2R_0$ we deduce
\begin{equation}
\label{eq:WmpKj}
   \Norm{K^+_j}{W^{m,p}(\rQ)} \le 
   C (\lambda^+_j+1)^{1/2}\, (2R_0)^{\lambda^+_j+\frac{n}{2}} \,
\end{equation}
where $C=C_{\eqref{eq:WmpKj}}$ can be defined as $\max\{1,\frac{1}{2R_0}\}C_{\eqref{eq:H1Kj}}C_{\eqref{eq:estKj}}$, so does not depend on $j$.

{\em Step 4.} Upper bound on the coefficients $|\mathbf{c}_j(\sfu_\sfe)|$. Since each $\sfu_\sfe$ belongs to the space $\gE_\Omega$ by construction, by \eqref{eq:cj} we find immediately
\[
   \sqrt{\lambda^+_j} \,\left(\frac{r_0}{2}\right)^{\lambda^+_j+\frac{n}{2}-1}
   \,|\mathbf{c}_j(\sfu_\sfe)| \le 
   \DNorm{\sfu_\sfe}{H^1(\Omega)},
   \quad\forall j\ge1,\;\forall\sfe\in\sfE^\infty\,.
\]

{\em Step 5.} Upper bound for the terms on the right part of \eqref{eq:lb}. Combining \eqref{eq:WmpKj} with the previous inequality we obtain
\begin{equation}
\label{eq:ub}
   \varepsilon^{\sfe+\lambda^+_j-m+\frac{n}{p}} 
   |\mathbf{c}_j(\sfu_\sfe)|\, 
   \Norm{K^+_j}{W^{m,p}(\rQ)} \le 
   C\,\varepsilon^{\lambda^+_j+\frac{n}{p} - m} 
   \Big(\frac{4R_0}{r_0} \Big)^{\lambda^+_j} 
   \varepsilon^{\sfe} \DNorm{\sfu_\sfe}{H^1(\Omega)} 
\end{equation}
where the constant $C=C_{\eqref{eq:ub}}$ can be defined as (recall that $\lambda^+_1>0$)
\[
   C_{\eqref{eq:ub}} = C_{\eqref{eq:WmpKj}} 
   \frac{r_0}{2}\Big(\frac{4R_0}{r_0} \Big)^{\frac{n}{2}} 
   \max_{j\ge1} \Big(\frac{\lambda^+_j+1}{\lambda^+_j}\Big)^{1/2} =
   C_{\eqref{eq:WmpKj}} 
   \frac{r_0}{2}\Big(\frac{4R_0}{r_0} \Big)^{\frac{n}{2}}
   \Big(\frac{\lambda^+_1+1}{\lambda^+_1}\Big)^{1/2} \,.
\]
{\em Step 6.} Upper bound for the right part of \eqref{eq:lb}. We write this double sum in two packets
\[
   \underbrace{\sum_{j\ge 1} \sum_{\sfe\in\sfE^\infty}}_{(j,\sfe)\neq(1,0)} 
   \varepsilon^{\sfe+\lambda^+_j-m+\frac{n}{p}} 
   |\mathbf{c}_j(\sfu_\sfe)|\, 
   \Norm{K^+_j}{W^{m,p}(\rQ)}
  = \Sigma_1 + \Sigma_2   
\]
with
\[
   \Sigma_1 =   \sum_{\sfe\in\sfE^\infty\!,\;\sfe\neq0} \varepsilon^{\sfe+\lambda^+_1-m+\frac{n}{p}} 
   |\mathbf{c}_1(\sfu_\sfe)|\, 
   \Norm{K^+_1}{W^{m,p}(\rQ)}
\]
and
\[
   \Sigma_2 =    \sum_{j\ge2} \sum_{\sfe\in\sfE^\infty} \varepsilon^{\sfe+\lambda^+_j-m+\frac{n}{p}} 
   |\mathbf{c}_j(\sfu_\sfe)|\, 
   \Norm{K^+_j}{W^{m,p}(\rQ)}\,.
\]
With $C=C_{\eqref{eq:S1}} := C_{\eqref{eq:ub}} \big(\frac{4R_0}{r_0} \big)^{\lambda^+_1}$, we have
\begin{equation}
\label{eq:S1}
   \Sigma_1 \le C \, \varepsilon^{\lambda^+_1+\frac{n}{p} - m} 
   \sum_{\sfe\in\sfE^\infty\!,\;\sfe\neq0} \varepsilon^{\sfe} \DNorm{\sfu_\sfe}{H^1(\Omega)}
\end{equation}
while with $C=C_{\eqref{eq:S2}} := C_{\eqref{eq:ub}}$
\begin{multline}
\label{eq:S2}
   \Sigma_2 \le C \, \sum_{j\ge2} \sum_{\sfe\in\sfE^\infty} 
   \varepsilon^{\lambda^+_j+\frac{n}{p} - m} 
   \Big(\frac{4R_0}{r_0} \Big)^{\lambda^+_j} 
   \varepsilon^{\sfe} \DNorm{\sfu_\sfe}{H^1(\Omega)} = \\
   C \varepsilon^{\frac{n}{p} - m}
   \left( \sum_{j\ge2}    \varepsilon^{\lambda^+_j} 
   \Big(\frac{4R_0}{r_0} \Big)^{\lambda^+_j} \right)
   \left( \sum_{\sfe\in\sfE^\infty} \varepsilon^{\sfe} \DNorm{\sfu_\sfe}{H^1(\Omega)}\right)\,.
\end{multline}

\begin{lemma}
\label{lem:S1}
Let $\xi\in(0,\max\{1,\varepsilon_\star\})$. There exists $\gamma_1>0$ such that
\[
   \sum_{\sfe\in\sfE^\infty\!,\;\sfe\neq0} \varepsilon^{\sfe} \DNorm{\sfu_\sfe}{H^1(\Omega)}
   \le \gamma_1\, \varepsilon^{\lambda^+_1}
   {\DNorm{f}{L^2(\Omega)}},\quad\forall\varepsilon\in[0,\xi].
\]
\end{lemma}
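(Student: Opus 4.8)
The plan is to extract the claimed bound directly from the fundamental estimate \eqref{eq:sfuepsb}, which states that $\sum_{\sfe\in\sfE^\infty}\varepsilon^{\sfe}\DNorm{\sfu_\sfe}{H^1(\Omega)}\le C\DNorm{f}{L^2(\Omega)}$ for $\varepsilon\in[0,\varepsilon_\star)$. The key observation is that the exponent set $\sfE^\infty$ is the monoid generated by $\sfE=\{\lambda^+_j\}_{j\ge1}\cup\{-\lambda^-_j\}_{j\ge1}$, and since all generators are positive (recall $\lambda^+_j>0$ and $-\lambda^-_j = \frac{n}{2}-1+\sqrt{(1-\tfrac n2)^2+\mu_j}>0$ because $\mu_j>0$), every nonzero element $\sfe\in\sfE^\infty$ is a finite sum of generators, hence satisfies $\sfe\ge\lambda^+_1$ — because the smallest generator among all the $\lambda^+_j$ is $\lambda^+_1$ (by \eqref{eq:muj} and \eqref{eq:lj}, $\mu_1<\mu_j$ gives $\lambda^+_1<\lambda^+_j$), and the smallest among the $-\lambda^-_j$ is $-\lambda^-_1$; one must also check $\lambda^+_1\le -\lambda^-_1$, which follows since $\lambda^+_1 = (1-\tfrac n2)+\sqrt{(1-\tfrac n2)^2+\mu_1}$ and $-\lambda^-_1 = -(1-\tfrac n2)+\sqrt{(1-\tfrac n2)^2+\mu_1}$, and $1-\tfrac n2\le0$ for $n\ge2$, so $\lambda^+_1\le -\lambda^-_1$. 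Thus $\min(\sfE^\infty\setminus\{0\}) = \lambda^+_1$, and this minimum is attained.

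With this in hand, the main step is a factoring-out argument. For any $\varepsilon\in[0,\xi]$ with $\xi<\max\{1,\varepsilon_\star\}$, write
\[
   \sum_{\sfe\in\sfE^\infty,\;\sfe\neq0}\varepsilon^{\sfe}\DNorm{\sfu_\sfe}{H^1(\Omega)}
   = \varepsilon^{\lambda^+_1}\sum_{\sfe\in\sfE^\infty,\;\sfe\neq0}\varepsilon^{\sfe-\lambda^+_1}\DNorm{\sfu_\sfe}{H^1(\Omega)}.
\]
Since $\sfe-\lambda^+_1\ge0$ for every nonzero $\sfe\in\sfE^\infty$, and since $\varepsilon\le\xi$, we bound $\varepsilon^{\sfe-\lambda^+_1}\le\max\{1,\xi\}^{\sfe-\lambda^+_1}\le\max\{1,\xi^{\sfe}\}$; more cleanly, one can simply note that for $\varepsilon\le1$ we have $\varepsilon^{\sfe-\lambda^+_1}\le1$, and for $1<\varepsilon<\varepsilon_\star$ (only relevant if $\varepsilon_\star>1$) one uses $\varepsilon^{\sfe-\lambda^+_1}\le\varepsilon^{\sfe}\le\xi^{\sfe}$ — in either case the residual sum $\sum_{\sfe\neq0}\varepsilon^{\sfe-\lambda^+_1}\DNorm{\sfu_\sfe}{H^1(\Omega)}$ is dominated by $\max\{1,\xi^{-\lambda^+_1}\}\sum_{\sfe\in\sfE^\infty}\xi'{}^{\sfe}\DNorm{\sfu_\sfe}{H^1(\Omega)}$ for a suitable $\xi'\in(0,\varepsilon_\star)$, which by \eqref{eq:sfuepsb} is at most $C\max\{1,\xi^{-\lambda^+_1}\}\DNorm{f}{L^2(\Omega)}$. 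Setting $\gamma_1 := C\max\{1,\xi^{-\lambda^+_1}\}$ (with $C$ the constant from \eqref{eq:sfuepsb}) gives the claim.

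The only genuinely delicate point — the "main obstacle," though it is mild — is handling the regime $\varepsilon>1$ when $\varepsilon_\star>1$, since then $\varepsilon^{\sfe-\lambda^+_1}$ is not bounded by $1$ and one cannot naively drop it. The remedy is to pick an auxiliary radius $\xi'$ with $\xi\le\xi'<\varepsilon_\star$ (when $\xi<1$ take $\xi'=\xi$; when $\xi\ge1$ take any $\xi'\in(\xi,\varepsilon_\star)$, possible since $\xi<\varepsilon_\star$), and to note that for $\varepsilon\le\xi$ and any nonzero $\sfe$ one has $\varepsilon^{\sfe}=\varepsilon^{\lambda^+_1}\varepsilon^{\sfe-\lambda^+_1}\le\varepsilon^{\lambda^+_1}\,\xi'{}^{\sfe-\lambda^+_1}$ once $\varepsilon\le\xi'$ and $\sfe\ge\lambda^+_1$; then $\sum_{\sfe\neq0}\xi'{}^{\sfe-\lambda^+_1}\DNorm{\sfu_\sfe}{H^1(\Omega)}=\xi'{}^{-\lambda^+_1}\sum_{\sfe\neq0}\xi'{}^{\sfe}\DNorm{\sfu_\sfe}{H^1(\Omega)}\le\xi'{}^{-\lambda^+_1}C\DNorm{f}{L^2(\Omega)}$ by \eqref{eq:sfuepsb} applied at $\xi'$. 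Defining $\gamma_1:=\xi'{}^{-\lambda^+_1}C$ completes the proof. Everything rests on the two facts established above: that the generators of $\sfE^\infty$ are positive (so no negative exponents occur and the monoid has a smallest nonzero element) and that this smallest element equals $\lambda^+_1$.
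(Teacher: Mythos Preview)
Your argument is correct and in fact more direct than the paper's. Both proofs begin the same way: identify $\lambda^+_1$ as the smallest nonzero element of the monoid $\sfE^\infty$ (your verification that $\lambda^+_1\le-\lambda^-_1$ via $1-\tfrac n2\le0$ is a detail the paper leaves implicit) and factor $\varepsilon^{\lambda^+_1}$ out of the sum. The divergence comes after this. The paper picks $\delta>0$ with $\xi^{1-\delta}<\varepsilon_\star$, splits the residual sum into the finitely many terms with $\sfe<\delta^{-1}\lambda^+_1$ and the infinite tail, and for the tail uses the inequality $\varepsilon^{\sfe-\lambda^+_1}\le(\varepsilon^{1-\delta})^{\sfe}$ (which needs $\varepsilon\le1$) so as to apply \eqref{eq:sfuepsb} at the $\varepsilon$-dependent point $\varepsilon^{1-\delta}$. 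You instead fix a single $\xi'\in[\xi,\varepsilon_\star)$, use the monotonicity bound $\varepsilon^{\sfe-\lambda^+_1}\le\xi'^{\,\sfe-\lambda^+_1}$ (valid for all nonzero $\sfe$ since $\sfe-\lambda^+_1\ge0$ and $\varepsilon\le\xi'$), and invoke \eqref{eq:sfuepsb} once at $\xi'$, giving the explicit constant $\gamma_1=C_{\eqref{eq:sfuepsb}}\,\xi'^{-\lambda^+_1}$. Your route avoids both the finite/infinite decomposition (hence the implicit appeal to local finiteness of $\sfE^\infty$) and the restriction $\varepsilon\le1$.

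One minor remark: your exposition meanders through a couple of tentative bounds before arriving at the clean argument in the last paragraph; only that last paragraph is needed. Also note that both your proof and the paper's ultimately require $\xi<\varepsilon_\star$ (so that $\xi'$, respectively $\xi^{1-\delta}$, lies in $[0,\varepsilon_\star)$); the ``$\max$'' in the stated hypothesis appears to be a typo for ``$\min$'', but this affects both arguments equally.
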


\begin{proof}
Recall that $\sfE^\infty\setminus\{0\}$ is the set of all finite sums of the $\pm\lambda^\pm_j$ defined in \eqref{eq:lj}. The smallest element of $\{\pm\lambda^\pm_j,\;j\ge1\}$ is $\lambda^+_1$. Hence it is also the smallest element of $\sfE^\infty\setminus\{0\}$. Hence we can write
\begin{equation}
\label{eq:eneq0}
   \sum_{\sfe\in\sfE^\infty\!,\;\sfe\neq0} \varepsilon^{\sfe} \DNorm{\sfu_\sfe}{H^1(\Omega)} =
   \varepsilon^{\lambda^+_1} 
   \sum_{\sfe\in\sfE^\infty\setminus\{0\}} \varepsilon^{\sfe-\lambda^+_1} 
   \DNorm{\sfu_\sfe}{H^1(\Omega)}.
\end{equation}
Let $\delta>0$ be such that $\xi^{1-\delta}<\varepsilon_\star$. 
\begin{itemize}
\item There is a finite number of elements $\sfe\in\sfE^\infty\setminus\{0\}$ such that $\sfe<\delta^{-1}\lambda^+_1$.

\item For the other exponents $\sfe$, we have $\delta\sfe\ge \lambda^+_1$, hence if $\varepsilon\le1$
\[
   \varepsilon^{\sfe-\lambda^+_1} = 
   \varepsilon^{\sfe(1-\lambda^+_1/\sfe)} \le 
   \varepsilon^{\sfe(1-\delta)}=
   (\varepsilon^{1-\delta})^\sfe.
\]
\end{itemize}
Hence for $\varepsilon\le1$
\[
\begin{aligned}
   \sum_{\sfe\in\sfE^\infty\setminus\{0\}} \varepsilon^{\sfe-\lambda^+_1} 
   \DNorm{\sfu_\sfe}{H^1(\Omega)} &=
   \sum_{\lambda^+_1\le\sfe<\delta^{-1}\lambda^+_1}
   \varepsilon^{\sfe-\lambda^+_1} 
   \DNorm{\sfu_\sfe}{H^1(\Omega)} +
   \sum_{\sfe\ge\delta^{-1}\lambda^+_1}
   \varepsilon^{\sfe-\lambda^+_1} 
   \DNorm{\sfu_\sfe}{H^1(\Omega)} \\
&\le
   \sum_{\lambda^+_1\le\sfe<\delta^{-1}\lambda^+_1}
   \varepsilon^{\sfe-\lambda^+_1} 
   \DNorm{\sfu_\sfe}{H^1(\Omega)} +
   \sum_{\sfe\ge\delta^{-1}\lambda^+_1}
   (\varepsilon^{1-\delta})^\sfe 
   \DNorm{\sfu_\sfe}{H^1(\Omega)} .
\end{aligned}
\]
Therefore, if $\varepsilon\le\xi$, thanks to \eqref{eq:sfuepsb} the right-hand side defines a quantity bounded independently of $\varepsilon$, which, owing to \eqref{eq:eneq0}, proves the lemma.
\end{proof}

\begin{lemma}
\label{lem:S2}
Let $\xi<\frac{r_0}{4R_0}$. There exists $\gamma_2>0$ such that 
\[
   \sum_{j\ge2}    \varepsilon^{\lambda^+_j} 
   \Big(\frac{4R_0}{r_0} \Big)^{\lambda^+_j} \le 
   \gamma_2\, \varepsilon^{\lambda^+_2},\quad \forall\varepsilon\in[0,\xi].
\]
\end{lemma}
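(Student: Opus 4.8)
The plan is to show that the series $\sum_{j\ge2}\varepsilon^{\lambda^+_j}\bigl(\tfrac{4R_0}{r_0}\bigr)^{\lambda^+_j}$ converges for $\varepsilon<\tfrac{r_0}{4R_0}$ and is dominated, up to a multiplicative constant, by its leading term $\varepsilon^{\lambda^+_2}$ once $\varepsilon$ is restricted to $[0,\xi]$ with $\xi<\tfrac{r_0}{4R_0}$. Writing $a:=\tfrac{4R_0}{r_0}>1$, the series is $\sum_{j\ge2}(a\varepsilon)^{\lambda^+_j}$, and since $a\xi<1$ we are summing a series of powers $t^{\lambda^+_j}$ with $t=a\varepsilon\in[0,a\xi]\subset[0,1)$.

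First I would recall the asymptotic growth of the exponents $\lambda^+_j$. From \eqref{eq:lj}, $\lambda^+_j = 1-\tfrac n2+\sqrt{(1-\tfrac n2)^2+\mu_j}$, and the eigenvalues $\mu_j$ of the Dirichlet Laplace--Beltrami operator on $\widehat\Gamma\subset\bS^{n-1}$ satisfy, by Weyl's law, $\mu_j\to\infty$; more crudely, one only needs $\mu_j\gtrsim j^{2/(n-1)}$ for large $j$, hence $\lambda^+_j\to\infty$ and in fact $\lambda^+_j\ge c\, j^{1/(n-1)}$ for some $c>0$ and all $j$ large. This guarantees that $\sum_{j\ge2}t^{\lambda^+_j}$ converges for every fixed $t\in[0,1)$: indeed $t^{\lambda^+_j}=e^{\lambda^+_j\log t}\le e^{-c j^{1/(n-1)}|\log t|}$ for $j$ large, which is summable.

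Next I would extract the leading power. Factor out $t^{\lambda^+_2}$ to get $\sum_{j\ge2}t^{\lambda^+_j}=t^{\lambda^+_2}\sum_{j\ge2}t^{\lambda^+_j-\lambda^+_2}$, where the exponents $\lambda^+_j-\lambda^+_2\ge0$ are nondecreasing in $j$ by \eqref{eq:muj}. For $t\in[0,a\xi]$ with $a\xi<1$, each term $t^{\lambda^+_j-\lambda^+_2}\le (a\xi)^{\lambda^+_j-\lambda^+_2}$ (here one must be slightly careful when $\lambda^+_j=\lambda^+_2$, where the term is simply $1$; the bound still holds since $(a\xi)^0=1$), and the resulting numerical series $\sum_{j\ge2}(a\xi)^{\lambda^+_j-\lambda^+_2}$ converges by the same Weyl-law argument. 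Call its sum $\gamma_2$; it is finite, positive, and depends only on $\xi$, $R_0$, $r_0$ and the geometry of $\widehat\Gamma$, not on $\varepsilon$. Since $t=a\varepsilon$, we conclude $\sum_{j\ge2}\varepsilon^{\lambda^+_j}a^{\lambda^+_j}=t^{\lambda^+_2}\sum_{j\ge2}t^{\lambda^+_j-\lambda^+_2}\le a^{\lambda^+_2}\,\gamma_2\,\varepsilon^{\lambda^+_2}$, which is the claimed bound after absorbing the harmless factor $a^{\lambda^+_2}$ into $\gamma_2$.

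The only genuine point requiring care — and thus the main obstacle — is justifying the convergence of $\sum_j t^{\lambda^+_j}$ uniformly enough to pull out the leading term, i.e.\ controlling the growth of $\lambda^+_j$; everything else is the elementary observation that a convergent series of nonnegative, nonincreasing-in-a-suitable-sense terms with $t<1$ is bounded by a constant multiple of its first term on a closed subinterval bounded away from the radius of convergence. One should state explicitly that one uses only $\lambda^+_j\to\infty$ together with, say, a polynomial lower bound on $\lambda^+_j$ coming from Weyl's law for $-\Delta^{\sf dir}_{\widehat\Gamma}$; in dimension $n=2$ this is trivial since $\lambda^+_j=j\pi/\omega$ grows linearly, and the series is a plain geometric-type series in $(4R_0/r_0)^{\pi/\omega}\varepsilon^{\pi/\omega}$.
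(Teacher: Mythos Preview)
Your proof is correct and follows essentially the same route as the paper: factor out the leading power $\varepsilon^{\lambda^+_2}(4R_0/r_0)^{\lambda^+_2}$, then bound the residual series $\sum_{j\ge2}(a\varepsilon)^{\lambda^+_j-\lambda^+_2}$ uniformly on $[0,\xi]$ by its value at $\varepsilon=\xi$, invoking Weyl's law for the eigenvalues of $-\Delta^{\sf dir}_{\widehat\Gamma}$ to guarantee convergence. The paper simply cites an external lemma for this last convergence step, whereas you spell it out directly; otherwise the arguments coincide.
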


\begin{proof}
We write
\[
   \sum_{j\ge2}    \varepsilon^{\lambda^+_j} 
   \Big(\frac{4R_0}{r_0} \Big)^{\lambda^+_j} =
   \varepsilon^{\lambda^+_2} \Big(\frac{4R_0}{r_0} \Big)^{\lambda^+_2}
   \sum_{j\ge2}  \varepsilon^{\lambda^+_j-\lambda^+_2} 
   \Big(\frac{4R_0}{r_0} \Big)^{\lambda^+_j-\lambda^+_2}   
\]
and use \cite[Lemma\,4.14]{CoDa2Mu:2024} (which relies on Weyl's law for the distribution of the eigenvalues $\mu_j$ of the operator $-\Delta^{\sf dir}_{\widehat\Gamma}$).
\end{proof}

Putting together \eqref{eq:S1} and \eqref{eq:S2}, and using Lemmas \ref{lem:S1}, \ref{lem:S2}, and the convergence estimate \eqref{eq:sfuepsb} we find
\begin{equation}
\label{eq:S1S2}
    \Sigma_1+\Sigma_2 \le \varepsilon^{\lambda^+_1+\frac{n}{p}-m}
   \big(\gamma'_1\,\varepsilon^{\lambda^+_1} + 
   \gamma'_2\, \varepsilon^{\lambda^+_2-\lambda^+_1}\big){\DNorm{f}{L^2(\Omega)}},
   \quad \forall\varepsilon\in[0,\xi].
\end{equation}
for some constants $\gamma'_1$ and $\gamma'_2$ independent of $\varepsilon$.

{\em Step 7.} Conclusion of the proof of Theorem \ref{th:Wmp}. Formulas \eqref{eq:lb} and \eqref{eq:S1S2} yield
\begin{equation*}
   \Norm{u_\varepsilon}{W^{m,p}(\varepsilon\rQ)} \ge 
   \varepsilon^{\lambda^+_1+\frac{n}{p}-m} \left( 
   |\mathbf{c}_1(\sfu_0)|\, \Norm{K^+_1}{W^{m,p}(\rQ)}
   - \big(\gamma'_1\,\varepsilon^{\lambda^+_1} + 
   \gamma'_2\, \varepsilon^{\lambda^+_2-\lambda^+_1}\big){\DNorm{f}{L^2(\Omega)}}
   \right).
\end{equation*}
Since the connectedness of $\widehat\Gamma$ implies that $\lambda^+_2>\lambda^+_1$, the Theorem is proved.
\end{proof}

\subsection{Extension to fractional order Sobolev spaces}
\label{ss:SS}
The result of Theorem \ref{th:Wmp} extends to other function spaces, in particular Sobolev spaces defined by H\"older type seminorms, called Sobolev-Slobodecki\u{\i} seminorms: For $\theta\in(0,1)$ and $p\ge1$, the $W^{\theta,p}(\sU)$ seminorm is defined by
\[
   \Normc{v}{W^{\theta,p}(\sU)}{p} = 
   \int_\sU\int_\sU \frac{|v(x)-v(y)|^p}{|x-y|^{\theta p+n}}\,\rd x\rd y
\]
and for non-integer $s>0$, the norm of $W^{s,p}(\sU)$ is 
\[
   \DNormc{v}{W^{s,p}(\sU)}{p} = 
   \DNormc{v}{W^{\lfloor s \rfloor,p}(\sU)}{p} +
   \sum_{|\alpha|= \lfloor s \rfloor} \Normc{\partial^\alpha v}{W^{s-\lfloor s \rfloor,p}(\sU)}{p}\,.
\]
It is easy to see that the identity \eqref{eq:varch} concerning the change of variables $x\mapsto\frac{x}{\varepsilon}$ extends to fractional seminorms: For any $K\in W^{s,p}(\sU)$ there holds
\[
   \Norm{K\big(\tfrac{\cdot}{\varepsilon}\big)}{W^{s,p}(\varepsilon\sU)} =
   \varepsilon^{-s+\frac{n}{p}}
   \Norm{K}{W^{s,p}(\sU)}\,.
\]
This shows that the blow-up estimates \eqref{eq:ueWmpQ} remain valid for fractional order Sobolev spaces $W^{s,p}(\Omega)$ if one replaces $m-\frac{n}{p}$ by $s-\frac{n}{p}$ everywhere.

\section{Dirichlet problem in polygons with approaching corners}
\label{S:polygon}
{From now on we are in dimension $n=2$ and all considered domains $\sU$ are polygons, which means that their boundaries are a finite union of segments. For simplicity we assume that these domains are Lipschitz, which means that their openings $\omega_j$ at each of their corners belongs to $(0,\pi)\cup(\pi,2\pi)$.  We will evaluate the regularity of solutions in the scale of Hilbertian Sobolev spaces of fractional order $H^s(\sU)=W^{s,2}(\sU)$.

We exhibit families $\Omega_\varepsilon$ of polygons in which several corners are approaching each other, giving rise to blow-up of Sobolev norms and several distinct realizations of solution operators.

\subsection{Blow up of Sobolev norms in polygons with approaching corners}
\label{ss:AppCor}
To fix ideas, we assume that the limiting domain is a polygon with only one non-convex corner placed at the origin $O$ and with opening $\omega\in(\pi,2\pi)$. Hence the cone $\Gamma$ is a sector of opening $\omega$. We assume that the pattern domain $\rP$ is polygonal too, which means that it has a piecewise affine boundary with a finite number $L\ge2$ of corners $O_\ell\in\partial\rP$, $\ell=1,\ldots,L$, with openings $\varpi_\ell$. We assume for simplicity that all $\varpi_\ell$ are non-convex. 
Then the $\Omega_\varepsilon$ are polygons with non-convex corners at points ${\textsc x}_\ell=\varepsilon O_\ell$. 

Considering the adjacent triangles $O,\,O_\ell,\,O_{\ell+1}$ for $\ell=1,\ldots,L-1$, one finds
\begin{equation}
\label{eq:varpi}
   \sum_{\ell=1}^L  (\varpi_\ell - \pi) = \omega -\pi,
\end{equation}
which implies that $\varpi_\ell<\omega$ for all $\ell=1,\ldots,L$. Setting
\begin{equation}
\label{eq:varpimax}
   \varpi := \max_{\ell=1}^L\{\varpi_\ell\}
\end{equation}
we see that
\begin{equation}
\label{eq:varpipi}
   \varpi < \omega.
\end{equation}

We illustrate this in Figure~\ref{F:2} with an example for which 
 $\omega=\frac{3\pi}{2}$ and $\varpi_\ell=\frac{5\pi}{4}$ for $\ell=1,2$.

\begin{figure}[h]
\caption{Cut corner: \ 
(a) Domain $\Omega$, \ 
(b) Pattern $\rP$ , \
(c) $\Omega_{\varepsilon}$ (for $\varepsilon=0.4$) }
\label{F:2}
\hglue-1.5ex
\begin{minipage}{0.32\textwidth}
\centering\ \\[2ex]
\begin{tikzpicture}[x=0.37\textwidth,y=0.37\textwidth]
\filldraw [fill=marin!10,draw=blue,thick](0,-1.1) 
  -- (1.1,-1.1) 
  -- (1.1,1.1) 
  -- (-1.1,1.1) 
  -- (-1.1,0) 
  -- (0,0) 
  -- (0,-1.1) ;
\draw[draw=blue](0.0,-0.8) arc(-90:180:0.8) (-0.8,0.0);
\path (-0.8,-0.0) node[anchor=north] {\Bl{$r_{0}$}} ;
\end{tikzpicture} \\[1ex]
(a)
\end{minipage} \;
\begin{minipage}{0.32\textwidth}
\centering\ \\[2.5ex]
\begin{tikzpicture}[x=0.37\textwidth,y=0.37\textwidth]
\draw[draw=red,thick] (-1.2,0) -- (-0.5,0) --  (0,-0.5) -- (0,-1.2) ;
\fill[pattern=dots,pattern color=red!128] (0,-1.2) -- (1.2,-1.2) -- (1.2,1.2) -- (-1.2,1.2) -- (-1.2,0) -- (-0.5,0) -- (0,-0.5) --(0,-1.2) ;
\draw[draw=red](0.0,-0.9) arc(-90:180:0.9) (-0.9,0.0);
\path (-0.9,-0.0) node[anchor=north] {\Rd{$R_0$}} ;
\end{tikzpicture} \\
(b)
\end{minipage} \;
\begin{minipage}{0.32\textwidth}
\centering\ \\[2ex]
\begin{tikzpicture}[x=0.37\textwidth,y=0.37\textwidth]
\fill [fill=marin!10](0,-0.8) -- (0,-1.1) 
  -- (1.1,-1.1) 
  -- (1.1,1.1) 
  -- (-1.1,1.1) 
  -- (-1.1,0.0)
  -- (-0.3,0) arc(180:-90:0.3) (0.0,-0.3) ;
\draw[draw=blue](0.0,-0.8) arc(-90:180:0.8) (-0.8,0.0);
\fill[pattern=dots,pattern color=red!128](0.0,-0.8) arc(-90:180:0.8) (-0.8,0.0)
  -- (-0.2,0) -- (0,-0.2) -- (0.0,-0.8) ;
\draw[draw=red](0.0,-0.3) arc(-90:180:0.3) (-0.3,0.0);
\draw [draw=mmagenta,thick](0,-1.1) 
  -- (1.1,-1.1) 
  -- (1.1,1.1) 
  -- (-1.1,1.1) 
  -- (-1.1,0.0)
  -- (-0.2,0) -- (0,-0.2) 
  -- (0.0,-1.1) ;
\path (-0.8,-0.0) node[anchor=north] {\Bl{$r_0$}} ;
\path (-0.3,-0.0) node[anchor=north] {\Rd{$\varepsilon R_0$}} ;
\end{tikzpicture}\\[2ex]
(c)
\end{minipage}

\end{figure}
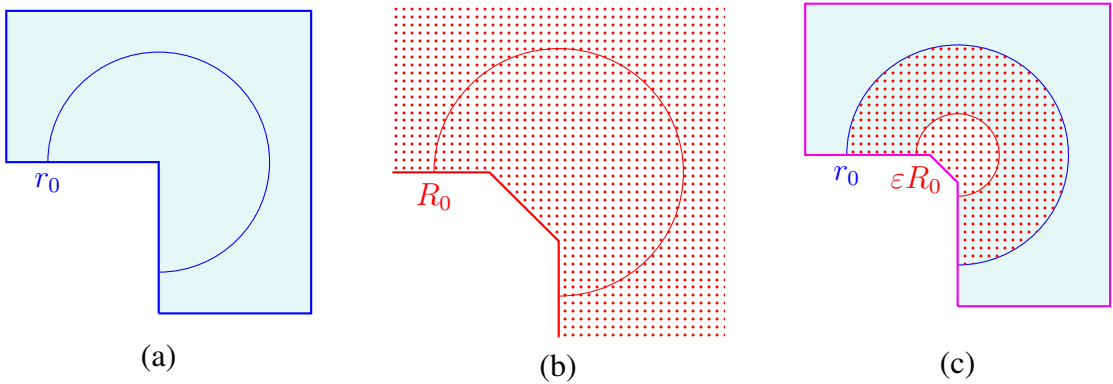

The regularity results in spaces $H^s$ proved in \cite{DBook:1988} (cf in particular Th.\,(18.13)) imply the following.

\begin{lemma}
\label{lem:Hs}
Let $s\ge1$ be such that $s - 1 < \frac{\pi}{\varpi_\ell}\,$ for all $\ell=1,\ldots,L$, i.e.
\begin{equation}
\label{eq:ell}
   s - 1 < \frac{\pi}{\varpi}\,.
\end{equation}
Then
\begin{enumerate}
\item If $f$ belongs to $\gF_\Omega\cap H^{s-2}(\Omega)$, then for all $\varepsilon<\varepsilon_0$, the solution $u_\varepsilon$ belongs to $H^s(\Omega_\varepsilon)$.
\item For all $j\ge1$, for $\rQ=\rP\cap \sB(R'_0)$ as in \eqref{eq:Q}, the function $K^+_j$ belongs to $H^s(\rQ)$ with estimates (for a constant $C$ independent of $j$, compare with \eqref{eq:estKj})
\[
   \Norm{K^+_j}{H^s(\rQ)} \le 
   C \DNorm{K^+_j}{H^{1}(\rP\cap\sB(2R_0))} .
\]
\end{enumerate}
\end{lemma}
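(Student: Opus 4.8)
The plan is to obtain both assertions directly from the Kondrat'ev--Dauge $H^{s}$ regularity theory for the Dirichlet Laplacian on plane polygons, as codified in \cite{DBook:1988}: hypothesis \eqref{eq:ell} is tailored so that, at every corner of every domain occurring below, the first singularity exponent lies strictly above $s-1$, and hence every solution at hand equals its ``regular part''. Observe at the outset that, since $\varpi>\pi$, \eqref{eq:ell} forces $s-1<\pi/\varpi<1$, so $s\in[1,2)$ and $s-2\in[-1,0)$; in particular $L^{2}\hookrightarrow H^{s-2}$ on any bounded Lipschitz domain, which is why the requirement $f\in H^{s-2}$ adds nothing to $f\in\gF_\Omega$.

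For assertion (1), recall that $\Omega_\varepsilon$ is a Lipschitz polygon whose corners are the $L$ nonconvex vertices ${\textsc x}_\ell=\varepsilon O_\ell$ with openings $\varpi_\ell$, together with the convex vertices inherited from $\Omega$, with openings $\omega_j\in(0,\pi)$. At each inherited vertex the first exponent $\pi/\omega_j$ exceeds $1>s-1$, and at each ${\textsc x}_\ell$ it equals $\pi/\varpi_\ell\ge\pi/\varpi>s-1$ by \eqref{eq:ell}; thus no corner of $\Omega_\varepsilon$ carries a singularity exponent in $(0,s-1]$. Since $f\in\gF_\Omega$ is flat near $O$, the datum $f|_{\Omega_\varepsilon}$ of \eqref{eq:pbeps} lies in $L^{2}(\Omega_\varepsilon)\subset H^{s-2}(\Omega_\varepsilon)$, and \cite[Th.\,(18.13)]{DBook:1988} then yields that the singular part of $u_\varepsilon$ is empty, i.e.\ $u_\varepsilon\in H^{s}(\Omega_\varepsilon)$ — near points of $\partial\Omega_\varepsilon$ away from the vertices the boundary is a segment, where this is merely interior and flat-boundary elliptic regularity.

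For assertion (2), I would reproduce \emph{Step 3} of the proof of Theorem~\ref{th:Wmp}, with the smooth-boundary estimate \eqref{estBRR'} replaced by its polygonal analogue, which is legitimate precisely because of \eqref{eq:ell}. Since all corners $O_\ell$ of $\rP$ lie in $\sB(R_0)\subset\sB(R'_0)$, the set $\mathcal D:=\rP\cap\sB(2R_0)$ is a bounded Lipschitz domain whose only corners are the nonconvex $O_\ell$ (opening $\varpi_\ell$) and the two right angles where the rays $\partial\Gamma$ meet $\partial\sB(2R_0)$; by the threshold count above, \eqref{eq:ell} guarantees the regularity shift $\{w\in H^1_0(\mathcal D):\Delta w\in H^{s-2}(\mathcal D)\}\to H^{s}(\mathcal D)$ for the Dirichlet problem on $\mathcal D$, with a fixed constant. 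To turn this global statement into a local estimate for $K^+_j$ — which lives on the unbounded $\rP$ and carries a Dirichlet condition only on $\partial\rP$ — I fix $R'_0<R''<2R_0$ and a cutoff $\chi$ equal to $1$ on $\sB(R'_0)$ with $\operatorname{supp}\chi\subset\sB(R'')$, and set $v_j:=\chi K^+_j$ (extended by $0$). Then $v_j\in H^1_0(\mathcal D)$, because $K^+_j$ vanishes on $\partial\rP$ and $\chi$ vanishes near $\rP\cap\partial\sB(2R_0)$, while $\Delta v_j=[\Delta,\chi]K^+_j$ is supported in $\sA(R'_0,R'')\cap\rP=\sA(R'_0,R'')\cap\Gamma$, a corner-free region where $\partial\rP$ is straight and $K^+_j$ is harmonic, hence smooth up to the boundary. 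Interior and flat-boundary elliptic estimates together with $s-2\le0$ give
\[
   \DNorm{\Delta v_j}{H^{s-2}(\mathcal D)}\le\DNorm{[\Delta,\chi]K^+_j}{L^{2}(\mathcal D)}\le C\,\DNorm{K^+_j}{H^{1}(\rP\cap\sB(2R_0))},
\]
with $C$ depending only on $\chi$, $\Gamma$ and $s$. Feeding this into the shift on $\mathcal D$ and restricting to $\rQ$, where $v_j=K^+_j$, yields
\[
   \Norm{K^+_j}{H^{s}(\rQ)}=\Norm{v_j}{H^{s}(\rQ)}\le\DNorm{v_j}{H^{s}(\mathcal D)}\le C'\,\DNorm{\Delta v_j}{H^{s-2}(\mathcal D)}\le C''\,\DNorm{K^+_j}{H^{1}(\rP\cap\sB(2R_0))},
\]
which is exactly the polygonal counterpart of \eqref{eq:estKj}.

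The one point that deserves attention, rather than being a genuine obstacle, is the \emph{independence of $j$} of the final constant: it holds because $C''$ is built from the operator-norm constant of the fixed Dirichlet solution operator on the fixed geometry $(\mathcal D,\chi,s)$, applied to the $j$-dependent right-hand sides $\Delta v_j$; no feature of $K^+_j$ beyond its harmonicity and its vanishing on $\partial\rP$ enters. Everything else is a verbatim application of \cite{DBook:1988} once the exponents $\pi/\varpi_\ell$ and $\pi/\omega_j$ have been compared with $s-1$.
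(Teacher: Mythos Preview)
Your proposal is correct and follows exactly the approach the paper indicates: the paper does not give a proof of this lemma but simply states that it follows from the $H^s$ regularity results in \cite{DBook:1988} (Th.\,(18.13)), and your argument is a faithful elaboration of that citation, checking the exponent condition at every corner and, for part (2), implementing the polygonal analogue of Step~3 of the proof of Theorem~\ref{th:Wmp} via a cutoff to reduce to a global Dirichlet problem on the fixed domain $\mathcal D=\rP\cap\sB(2R_0)$. The independence of $j$ is correctly traced to the fact that only the fixed geometry $(\mathcal D,\chi,s)$ enters the constants.
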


As the assumptions on $\rP$ are covered by the framework of \cite{CoDa2Mu:2024}, a straightforward adaptation of the proof of Theorem \ref{th:Wmp} yields
\begin{theorem}
\label{th:Hs}
Let $s\ge1$ satisfy condition \eqref{eq:ell}.
Let $f\in\gF_\Omega\cap H^{s-2}(\Omega)$.
Then for any $\varepsilon\in(0,\varepsilon_0)$, the solution $u_\varepsilon$ of problem \eqref{eq:pbeps} belongs to $H^s(\Omega_\varepsilon)$. Moreover, for any $\varepsilon\in(0,\varepsilon_\star]$ it 
satisfies the inequality
\begin{equation}
\label{eq:ueHsQ}
   \Norm{u_\varepsilon}{H^s(\varepsilon\rQ)} \ge 
   \varepsilon^{\frac{\pi}{\omega}-s+1} \left( 
   |\mathbf{c}_1(u_0)|\, \Norm{K^+_1}{H^s(\rQ)}
   - \gamma_0\,\varepsilon^{\frac{\pi}{\omega}} \DNorm{f}{L^2(\Omega)}
   \right)
\end{equation}
where $\gamma_0$ is independent of $f$ and $\varepsilon$.
\end{theorem}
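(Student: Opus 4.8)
The plan is to follow the blueprint of the proof of Theorem \ref{th:Wmp} almost verbatim, replacing $W^{m,p}$ by $H^s$ throughout, so the only genuine work is to check that each of the seven steps of that proof survives the change from integer-order $W^{m,p}$ norms to fractional Hilbertian norms $H^s$, and that the role of the condition $\lambda^+_1<m-\tfrac np$ is now played by condition \eqref{eq:ell}. Concretely, in dimension $n=2$ we have $\mu_1=(\pi/\omega)^2$, so $\lambda^+_1=\pi/\omega$, and the exponent $\varepsilon^{\lambda^+_1+\frac np-m}$ appearing in \eqref{eq:ueWmpQ} becomes $\varepsilon^{\frac\pi\omega+\frac 22-s}=\varepsilon^{\frac\pi\omega+1-s}$, which is exactly the prefactor in \eqref{eq:ueHsQ}. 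Likewise $\lambda'=\min\{\lambda^+_1,\lambda^+_2-\lambda^+_1\}=\min\{\tfrac\pi\omega,\tfrac\pi\omega\}=\tfrac\pi\omega$ since $\lambda^+_j=j\pi/\omega$, which explains why the correction term carries the power $\varepsilon^{\pi/\omega}$.

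First I would invoke Theorem \ref{th:uepsinner} to expand $u_\varepsilon$ in $\Omega_\varepsilon\cap\sB(r_0/2)$ and, using the scaling identity for fractional seminorms recalled in Section \ref{ss:SS}, namely $\Norm{K(\tfrac\cdot\varepsilon)}{W^{s,p}(\varepsilon\sU)}=\varepsilon^{-s+\frac np}\Norm{K}{W^{s,p}(\sU)}$, isolate the $(j,\sfe)=(1,0)$ term and bound the remainder by a double sum exactly as in \eqref{eq:lb}, with $m-\tfrac np$ replaced by $s-\tfrac np=s-1$. The positivity of $\Norm{K^+_1}{H^s(\rQ)}$ (item (ii) of the theorem, and the clause ``$\gamma_0$ independent of $f,\varepsilon$'') is argued as in Step 2: if this seminorm vanished then $K^+_1$ would be a harmonic polynomial on $\rQ$, hence on all of $\rP$ by connectedness and unique continuation, so its nodal set would be a cone, contradicting $\rP\neq\Gamma$; since $s>1$ here, ``$W^{s,p}$-seminorm zero'' still forces $K^+_1$ to be a polynomial of degree $<\lfloor s\rfloor$. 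The uniform-in-$j$ bound $\Norm{K^+_j}{H^s(\rQ)}\le C\DNorm{K^+_j}{H^1(\rP\cap\sB(2R_0))}$ is precisely item (2) of Lemma \ref{lem:Hs}, which substitutes for the elliptic a priori estimate \eqref{estBRR'}; combined with \eqref{eq:H1Kj} for $R=2R_0$ it reproduces \eqref{eq:WmpKj}, and Steps 4--6 (the bounds on $|\mathbf c_j(\sfu_\sfe)|$ via \eqref{eq:cj}, the splitting into $\Sigma_1+\Sigma_2$, and Lemmas \ref{lem:S1} and \ref{lem:S2}) go through word for word since none of them refers to the target norm on $\rQ$ beyond the already-established estimate \eqref{eq:WmpKj}.

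The main obstacle — the only place where one must do more than transcribe — is verifying that Lemma \ref{lem:Hs} genuinely applies, i.e.\ that under \eqref{eq:ell} the fractional regularity $u_\varepsilon\in H^s(\Omega_\varepsilon)$ holds for \emph{every} $\varepsilon<\varepsilon_0$ (not just $\varepsilon=0$ or smooth $\Omega_\varepsilon$), with the a priori bound uniform enough to run the argument, and that $K^+_j\in H^s(\rQ)$ with the stated $j$-independent constant. For $\varepsilon>0$ the domains $\Omega_\varepsilon$ are polygons whose corner openings are $\omega$-independent (the $\varpi_\ell<\omega$ plus whatever corners $\Omega$ already had away from $O$), so the threshold for $H^s$ regularity is governed by $\varpi=\max_\ell\varpi_\ell$, and \eqref{eq:varpipi} guarantees $\pi/\varpi>\pi/\omega$, so condition \eqref{eq:ell} does leave room for $s-1\ge\pi/\omega$ while still ensuring $H^s$ regularity on each $\Omega_\varepsilon$; this is the content of \cite[Th.\,(18.13)]{DBook:1988}, and for $K^+_j$ on $\rQ$ one applies the same corner-regularity theory to the polygon $\rP$, whose corners are the $\varpi_\ell$, the constant being uniform in $j$ because $K^+_j$ is harmonic with zero Dirichlet data so only the $H^1$ norm on a slightly larger set enters, exactly as in \eqref{eq:estKj}. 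Once Lemma \ref{lem:Hs} is in hand, putting together the analogues of \eqref{eq:lb} and \eqref{eq:S1S2} and using $u_0=\sfu_0$, $\mathbf c_1(\sfu_0)=\mathbf c_1(u_0)$, together with $\lambda^+_1=\pi/\omega$ and $\lambda'=\pi/\omega$, yields \eqref{eq:ueHsQ} with $\gamma_0=\gamma'_1+\gamma'_2$, completing the proof.
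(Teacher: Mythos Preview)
Your proposal is correct and takes exactly the approach the paper itself indicates: the paper's entire proof of Theorem~\ref{th:Hs} is the one-line remark that it is ``a straightforward adaptation of the proof of Theorem~\ref{th:Wmp}'', and you have carried out precisely that adaptation, correctly identifying Lemma~\ref{lem:Hs}(2) as the substitute for the smooth-domain elliptic estimate \eqref{estBRR'} in Step~3, the fractional scaling identity from Section~\ref{ss:SS} as the substitute for \eqref{eq:varch}, and the specialization $\lambda^+_j=j\pi/\omega$ (whence $\lambda'=\pi/\omega$) for the two-dimensional sector. One small imprecision: in your Step~2 argument you write that vanishing of the $H^s$ seminorm for $1<s<2$ forces $K^+_1$ to be a polynomial of degree $<\lfloor s\rfloor$, but it only forces degree $\le\lfloor s\rfloor=1$ (the first derivatives are constant, so $K^+_1$ is affine); the contradiction with $\rP\neq\Gamma$ still follows, since a nonzero affine function cannot vanish on both rays of $\partial\Gamma$ when $\omega\neq\pi$.
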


\begin{corollary}
If
\begin{equation}
\label{eq:l1Lell}
   \frac{\pi}{\omega} < s-1 < \frac{\pi}{\varpi}\,,
\end{equation}
and $\mathbf{c}_1(u_0)\neq0$ (condition \eqref{eq:c1}), the inequality \eqref{eq:ueHsQ} implies the blow-up of the seminorm $\Norm{u_\varepsilon}{H^s(\varepsilon\rQ)}$.
\end{corollary}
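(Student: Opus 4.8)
The plan is to derive the blow-up from the lower bound \eqref{eq:ueHsQ} by a completely elementary argument, exactly as the statement itself suggests. The point is that the factor inside the parentheses in \eqref{eq:ueHsQ} converges to a strictly positive quantity as $\varepsilon\to0$, so that for small $\varepsilon$ the whole right-hand side behaves like a positive constant times $\varepsilon^{\pi/\omega-s+1}$, and this power blows up precisely when $s-1>\pi/\omega$.

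Concretely, I would proceed as follows. First, observe that the left inequality in \eqref{eq:l1Lell}, namely $\pi/\omega<s-1$, is equivalent to $\pi/\omega-s+1<0$, so the prefactor $\varepsilon^{\pi/\omega-s+1}$ tends to $+\infty$ as $\varepsilon\to0^+$. Second, the right inequality $s-1<\pi/\varpi$ in \eqref{eq:l1Lell} is precisely condition \eqref{eq:ell} with $\varpi$ in place of $\varpi_\ell$ in the worst case, so Theorem~\ref{th:Hs} applies: the solution $u_\varepsilon$ indeed belongs to $H^s(\Omega_\varepsilon)$ (in particular to $H^s(\varepsilon\rQ)$) for all $\varepsilon\in(0,\varepsilon_0)$, and the lower bound \eqref{eq:ueHsQ} holds for $\varepsilon\in(0,\varepsilon_\star]$. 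Note also that, by $n=2$ and the sector opening $\omega$, we have $\lambda^+_1=\pi/\omega$, consistent with the exponents appearing in \eqref{eq:ueHsQ}. Third, since in dimension $n=2$ for a sector of opening $\omega$ one has $\lambda^+_1=\pi/\omega>0$, the correction term $\gamma_0\,\varepsilon^{\pi/\omega}\DNorm{f}{L^2(\Omega)}\to0$ as $\varepsilon\to0$. Hence, using hypothesis \eqref{eq:c1} that $\mathbf{c}_1(u_0)\neq0$ together with the positivity $\Norm{K^+_1}{H^s(\rQ)}>0$ from part (ii) of Theorem~\ref{th:Hs} (established in Step 2 of the proof of Theorem~\ref{th:Wmp} via the connectedness argument, which applies verbatim since $\rP\neq\Gamma$), there is $\varepsilon_1\in(0,\varepsilon_\star]$ such that
\[
   |\mathbf{c}_1(u_0)|\, \Norm{K^+_1}{H^s(\rQ)}
   - \gamma_0\,\varepsilon^{\frac{\pi}{\omega}} \DNorm{f}{L^2(\Omega)}
   \ge \tfrac{1}{2}\,|\mathbf{c}_1(u_0)|\, \Norm{K^+_1}{H^s(\rQ)} =: \kappa > 0
\]
for all $\varepsilon\in(0,\varepsilon_1]$. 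Combining this with \eqref{eq:ueHsQ} gives
\[
   \Norm{u_\varepsilon}{H^s(\varepsilon\rQ)} \ge \kappa\,\varepsilon^{\frac{\pi}{\omega}-s+1},
   \qquad \varepsilon\in(0,\varepsilon_1].
\]
Finally, since $\varepsilon\rQ\subset\Omega_\varepsilon$ and seminorms are monotone under restriction, $\Norm{u_\varepsilon}{H^s(\Omega_\varepsilon)}\ge\Norm{u_\varepsilon}{H^s(\varepsilon\rQ)}\ge\kappa\,\varepsilon^{\pi/\omega-s+1}\to\infty$ as $\varepsilon\to0^+$, which is the asserted blow-up.

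There is essentially no obstacle here: the corollary is a direct reading of Theorem~\ref{th:Hs} under the sign conditions \eqref{eq:l1Lell}. The only two things requiring a word of justification are that $\Norm{K^+_1}{H^s(\rQ)}>0$ — which, as noted, follows from the same nodal-set-versus-cone argument used in Step 2 of the proof of Theorem~\ref{th:Wmp}, valid for fractional $s$ because a nonzero polynomial cannot have the $W^{s,2}$-seminorm vanish on an open set — and that the threshold condition \eqref{eq:ell} is fulfilled, which is literally the right half of \eqref{eq:l1Lell}. If one wishes, one may additionally record the matching upper bound from \eqref{eq:ueWmpQu} (adapted to $H^s$) to conclude that $\Norm{u_\varepsilon}{H^s(\varepsilon\rQ)}$ blows up exactly like $\varepsilon^{\pi/\omega-s+1}$, but this is not needed for the statement.
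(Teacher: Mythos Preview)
Your argument is correct and is exactly the direct reading of \eqref{eq:ueHsQ} that the paper intends; the corollary is stated there without proof as an immediate consequence of Theorem~\ref{th:Hs}. Two minor points: the itemized parts (i)--(iii) you cite belong to Theorem~\ref{th:Wmp}, not Theorem~\ref{th:Hs}; and your parenthetical justification for the positivity of $\Norm{K^+_1}{H^s(\rQ)}$ is stated backwards---low-degree polynomials \emph{do} have vanishing $W^{s,2}$-seminorm, so the correct logic (as in Step~2) is that a vanishing seminorm would force $K^+_1$ to be a polynomial, and \emph{then} the nodal-set-versus-cone contradiction with $\rP\neq\Gamma$ kicks in.
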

}

{
\subsection{Solution operators in polygons}
\label{ss:SolOp}
Let $\sU$ be a bounded polygon. The solution operator of problem \eqref{eq:1}
\[
\begin{array}{cccc}
   T : & H^{-1}(\sU) & \longrightarrow & H^1_0(\sU) \\
       & f &\longmapsto & u
\end{array}
\]
is an isomorphism. We denote it more specifically by $T_0(\sU)$, with its source and target spaces 
\[
   A_0(\sU) := H^{-1}(\sU)\quad\mbox{and}\quad B_0(\sU) := H^1_0(\sU).
\]

Concerning solutions in $H^2$,}
there is a remarkable identity valid on polygons $\sU$ (and more generally polyhedra)
\begin{equation}
\label{eq:H2}
   \forall u \in H^2\cap H^1_0(\sU),\quad
   \int_\sU |\Delta u |^2 = \sum_{|\alpha|=2} \int_\sU |\partial^\alpha u |^2\,,
\end{equation}
for which we can refer to Hanna \& Smith \cite{HannaSmith:1967} (and earlier to \cite{Aronszajn:1951}) and \cite{Grisvard:1972}. The assumption that $u$ belongs to $H^2(\sU)$ means that the coefficients of the singular functions associated with each non-convex corner of $\sU$ are all zero. 

{Set
\[
   A_1(\sU) := L^2(\sU)\quad\mbox{and}\quad B_1(\sU) := H^2\cap H^1_0(\sU).
\] 
If $\sU$ is a convex polygon, the solution operator of problem \eqref{eq:1} is an isomorphism $T_1(\sU):A_1(\sU)\to B_1(\sU)$. 
\smallskip

If $\sU$ has non-convex corners, there are two ways of defining solution operators at this level:
\begin{itemize}
\item By restriction of the source space, setting
\[
   A_1^\heartsuit(\sU) := \{f\in L^2(\sU),\quad Tf\in H^2(\sU)\}
\]
and keeping $B_1(\sU)$, which defines
\[
   T_1^\heartsuit(\sU) : A_1^\heartsuit(\sU) \longrightarrow B_1(\sU).
\]
\item By extension of the target space, setting
\[
   B_1^\spadesuit(\sU) := \{u\in H^1_0(\sU),\quad \Delta u\in L^2(\sU)\}
\]
and keeping $A_1(\sU)$, which defines
\[
   T_1^\spadesuit(\sU) : A_1(\sU) \longrightarrow B_1^\spadesuit(\sU).
\]
\end{itemize}
Both realizations $T_1^\heartsuit(\sU)$ and $T_1^\spadesuit(\sU)$ define isomorphisms: Concerning $T_1^\heartsuit(\sU)$ this is a consequence of the identity \eqref{eq:H2}, and for $T_1^\spadesuit(\sU)$ this is a consequence of \cite{Kond:1967,Grisvard:1972}.

Using functional interpolation, we can define accordingly for $\sigma\in(0,1)$ two realizations $T_\sigma^\heartsuit(\sU)$ and $T_\sigma^\spadesuit(\sU)$ acting between interpolation spaces 
\[
   T_\sigma^\heartsuit(\sU) : A_\sigma^\heartsuit(\sU) \longrightarrow B_\sigma(\sU)
   \quad\mbox{and}\quad
   T_\sigma^\spadesuit(\sU) : A_\sigma(\sU) \longrightarrow B_\sigma^\spadesuit(\sU),
\]
where
\[
   A_\sigma(\sU) = \big[A_1(\sU),A_0(\sU)\big]_{1-\sigma}\,, 
   \quad\quad
   A_\sigma^\heartsuit(\sU) = \big[A_1^\heartsuit(\sU),A_0(\sU)\big]_{1-\sigma}
\]
and
\[
   B_\sigma(\sU) = \big[B_1(\sU),B_0(\sU)\big]_{1-\sigma}\,, 
   \quad\quad
   B_\sigma^\spadesuit(\sU) = \big[B_1^\spadesuit(\sU),B_0(\sU)\big]_{1-\sigma}
\]
The standard interpolation spaces $A_\sigma(\sU)$ and $B_\sigma(\sU)$ are well known:
\begin{equation}
\label{eq:StInterp}
   B_\sigma(\sU) = H^{1+\sigma}\cap H^1_0(\sU)
   \quad\mbox{and}\quad
   A_\sigma(\sU) = H^{-1+\sigma}(\sU)
\end{equation}
if, for $\sigma=\frac{1}{2}$ one is careful to define the space $H^{-1/2}(\Omega)$  
as the dual space of the space of functions in $H^{1/2}(\sU)$ whose extension by $0$ belongs to $H^{1/2}(\R^2)$ (the latter space is sometimes known as $H^{1/2}_{00}(\Omega)$  \cite{LionsMagenes68}} or as $\widetilde H^{1/2}(\Omega)$ \cite{Triebel78}).

The less standard spaces $A_\sigma^\heartsuit(\sU)$ and $B_\sigma^\spadesuit(\sU)$ are more delicate and depend on openings at non-convex corners as we will see for the families $(\Omega_\varepsilon)$.

\subsection{Solution operators in families of polygons with approaching corners}
In the case of the family of polygons $\Omega_\varepsilon$ introduced in sec.\ \ref{ss:AppCor}
set
\[
   s_\Omega = 1+\frac{\pi}{\omega}\quad\mbox{and}\quad
   s_\rP = 1+\frac{\pi}{\varpi}\,.
\]
Then for all $s\in(1,s_\Omega)$, the Dirichlet problem defines an isomorphism from $H^1_0\cap H^s(\Omega)$ onto $H^{s-2}(\Omega)$. 

\subsubsection{Below the regularity threshold of the pattern domain}
Likewise, for all $\varepsilon\in(0,\varepsilon_0)$ and for all $s\in(1,s_\rP)$, the Dirichlet problem defines an isomorphism from $H^1_0\cap H^s(\Omega_\varepsilon)$ onto $H^{s-2}(\Omega_\varepsilon)$. 
This is proved in \cite{DBook:1988} for $s\ne\frac32$, and for $s=\frac32$ it then follows by a simple Hilbert space interpolation argument if one defines correctly the space $H^{-1/2}$ as mentioned above.

Rephrased using the interpolation spaces of sec.\ \ref{ss:SolOp}, we find that, when $1\le s<s_\rP$ and setting $\sigma=s-1$, the space $A_\sigma^\heartsuit(\Omega_\varepsilon)$ coincides with $H^{-1+\sigma}(\Omega_\varepsilon)$, which means in particular that the two operators $T_\sigma(\Omega_\varepsilon):H^{-1+\sigma}(\Omega_\varepsilon)\to H^{1+\sigma}\cap H^1_0(\Omega_\varepsilon)$ and $T_\sigma^\heartsuit(\Omega_\varepsilon)$ coincide.

But a consequence of Theorem \ref{th:Hs} is that the operator norm of $T_\sigma(\Omega_\varepsilon)$ blows up as $\varepsilon\to 0$: Indeed, set $f=\Delta(\varphi h^+_1)$, cf Lemma \ref{lem:2.1}, where in the present case
\[
   h^+_1 = r^{\frac{\pi}{\omega}} \sin\big(\frac{\pi\theta}{\omega}\big).
\]
By construction $u_0=\varphi h^+_1$, hence \eqref{eq:ueHsQ} holds for $u_\varepsilon$ with $\mathbf{c}_1(u_0)=1$.
Since the  $H^s(\Omega_\varepsilon)$ norm of $u_\varepsilon$ is larger than its seminorm in the region $\varepsilon\rQ$, we find
\[
\begin{aligned}
   |\me|\me| T_\sigma(\Omega_\varepsilon) |\me|\me| \ge 
   \frac{\DNorm{u_\varepsilon}{H^s(\Omega_\varepsilon)}}{\DNorm{f}{H^{s-2}(\Omega_\varepsilon)}} 
   &\ge
   \frac{\varepsilon^{\frac{\pi}{\omega}+1-s} \left( 
   \Norm{K^+_1}{H^s(\rQ)}
   - \gamma_0\,\varepsilon^{\frac{\pi}{\omega}} \DNorm{f}{L^2(\Omega)}
   \right)}{\DNorm{f}{H^{s-2}(\Omega_\varepsilon)}} \\
   &\ge
   \frac{\varepsilon^{\frac{\pi}{\omega}+1-s}  
   \Norm{K^+_1}{H^s(\rQ)}}{\DNorm{f}{H^{s-2}(\Omega_\varepsilon)}}
   - \gamma_0\,\varepsilon^{\frac{2\pi}{\omega}+1-s} 
\end{aligned}
\]
Since $\frac{2\pi}{\omega}+1-s>2-s>0$, and $\Norm{K^+_1}{H^s(\rQ)}>0$ we deduce that for all $s,\; s_\Omega<s<s_\rP$,
\begin{equation}
\label{eq:Tsig}
   |\me|\me| T_\sigma(\Omega_\varepsilon) |\me|\me| \ge 
   \gamma\, \varepsilon^{s_\Omega-s}\,,
\end{equation}
for $\varepsilon$ small enough with a $\gamma>0$ independent of $\varepsilon$.

By contrast, the basic estimate for an exact interpolation scale
yields
\[
   |\me|\me| T_\sigma^\heartsuit(\Omega_\varepsilon) |\me|\me| \;\le\;
   |\me|\me| T_1^\heartsuit(\Omega_\varepsilon) |\me|\me|^{\sigma} \;
   |\me|\me| T_0(\Omega_\varepsilon) |\me|\me|^{1-\sigma}.
\]
Since the domains $\Omega_\varepsilon$ are uniformly bounded polygons and owing to \eqref{eq:H2} we have {\em uniform bounds}
\[
   |\me|\me| T^\heartsuit_1(\Omega_\varepsilon) |\me|\me| \le M
   \quad\mbox{and}\quad
   |\me|\me| T_0(\Omega_\varepsilon) |\me|\me| \le M
\]
hence uniform bounds on $|\me|\me| T_\sigma^\heartsuit(\Omega_\varepsilon) |\me|\me|$ by interpolation. 

The apparent contradiction with \eqref{eq:Tsig} disappears when one understands that the operator norms $|\me|\me| T_\sigma(\Omega_\varepsilon) |\me|\me|$ and $|\me|\me| T_\sigma^\heartsuit(\Omega_\varepsilon) |\me|\me|$ are based on distinct norms in the spaces $H^{-1+\sigma}(\Omega_\varepsilon)\simeq A_\sigma^\heartsuit(\Omega_\varepsilon)$: The norm in $H^{-1+\sigma}(\Omega_\varepsilon)$ is the usual norm, while the norm in $A_\sigma^\heartsuit(\Omega_\varepsilon)$ is the interpolation norm of $\big[A_1^\heartsuit(\sU),A_0(\sU)\big]_{1-\sigma}$. We have with $f=\Delta(\varphi h^+_1)$ and $u_\varepsilon=T(\Omega_\varepsilon)f$
\[
   \frac{\DNorm{u_\varepsilon}{H^s(\Omega_\varepsilon)}}{\DNorm{f}{H^{s-2}(\Omega_\varepsilon)}}
   \ge \gamma\, \varepsilon^{s_\Omega-s}
   \quad\mbox{and}\quad
   \frac{\DNorm{u_\varepsilon}{B_\sigma^\heartsuit(\Omega_\varepsilon)}}
   {\DNorm{f}{A_\sigma^\heartsuit(\Omega_\varepsilon)}} \le M\,.
\]
Since for target spaces, by \eqref{eq:StInterp} the spaces $B_\sigma^\heartsuit(\Omega_\varepsilon)$ have a norm uniformly equivalent to the $H^s(\Omega_\varepsilon)$ norm we deduce (recall that $\sigma=s-1$)
\begin{equation}
\label{eq:fAH}
   \DNorm{f}{A_\sigma^\heartsuit(\Omega_\varepsilon)} \ge \gamma'\,
   \varepsilon^{s_\Omega-s} \DNorm{f}{H^{s-2}(\Omega_\varepsilon)}
\end{equation}
for a constant $\gamma'>0$ independent of $\varepsilon$.

When $s$ reaches the threshold $s_\rP$, the space $A_\sigma^\heartsuit(\Omega_\varepsilon)$ does not coincide any more with $H^{-1+\sigma}(\Omega_\varepsilon)$, and is not even closed in it. 
This phenomenon is in accordance with what has been observed in more general studies of interpolation of subspaces with finite codimension \cite{Asekritova:2015,Zerulla:2022}.
Results of the latter would, for $1\le s<s_{\rP}$, imply the equality of spaces 
$A_{s-1}^\heartsuit(\Omega_\varepsilon)=H^{s-2}(\Omega_\varepsilon)$ 
with equivalence of norms, but would not provide estimates of the constants in this equivalence which, as we have seen in \eqref{eq:fAH}, are not uniformly bounded as $\varepsilon\to0$ if $s_{\Omega}<s<s_{\rP}$.

\subsubsection{Above the regularity threshold of the pattern domain}
Assume for simplicity that all openings $\varpi_\ell$ are equal:
\[
   \varpi_\ell=\varpi,\quad \ell=1,\ldots,L,
\]
Hence $L(\varpi-\pi)=\omega-\pi$ by \eqref{eq:varpi}. When $s$ is above $s_\rP$, with still $s\le2$, the space $A_\sigma^\heartsuit(\Omega_\varepsilon)$ has the codimension $L$ in $H^{-1+\sigma}(\Omega_\varepsilon)$ and is defined by orthogonality conditions to $L$ dual singular functions. 

By contrast, the augmented space $B_\sigma^\spadesuit(\Omega_\varepsilon)$ is the direct sum of $H^{1+\sigma}(\Omega_\varepsilon)$ with $L$ independent singular functions.
Let us write this in detail for the solutions $u_\varepsilon$ when $f$ belongs to $\gF_\Omega$. 
The splitting of $u_\varepsilon$ in regular and singular parts can be deduced from the inner expansion \eqref{uepsinner.eq1} via the splitting of the canonical harmonic functions $K^+_j$ \eqref{eq:Kj} in $\rP$, which relies on the following result, \cite{Grisvard:1972,DBook:1988}:

\begin{lemma}
Let $s\in(s_\rP,2]$ and $0<\oR<\oR'$.
If $U$ belongs to $H^1(\rP\cap\sB(\oR'))$, satisfies the Dirichlet condition $U=0$ on $\partial\rP\cap\sB(\oR')$ and is such that $\Delta U\in H^{s-2}(\rP\cap\sB(\oR'))$, then there exist coefficients $D_\ell$ such that
\begin{equation}
\label{eq:singreg}
   U\dd{H^s}:=U - \sum_{\ell=1}^L D_\ell \Psi_\ell S_\ell \in H^s(\rP\cap\sB(\oR))
\end{equation}
and we have the estimates
\begin{equation}
\label{singregBRR'}
   \Norm{U\dd{H^s}}{H^{s}(\rP\cap\sB(\oR))} + \sum_{\ell=1}^L |D_\ell| \le 
   C\big( \DNorm{\Delta U}{H^{s-2}(\rP\cap\sB(\oR'))} 
   + \DNorm{U}{H^{1}(\rP\cap\sB(\oR'))} \big)\,.
\end{equation}
Here $\Psi_\ell$ is a cut-off function equal to $1$ in a neighborhood of the corner $O_\ell$ and to $0$ at the other corners, and 
\[
   S_\ell = R_\ell^{\frac{\pi}{\varpi}} \sin \frac{\pi\theta_\ell}{\varpi}
\]
in suitable polar coordinates $(R_\ell,\theta_\ell)$ centered at $O_\ell$. 
\end{lemma}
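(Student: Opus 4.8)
The plan is to reduce the statement to the classical singular-regular splitting theorem for the Dirichlet Laplacian on a polygon, localized to a neighborhood of the perturbation region, and then to track the constants carefully so they do not depend on the individual data. First I would note that $\rP$ has a smooth boundary outside $\sB(R_0)$ and that inside $\sB(\oR')$ it is a bounded polygon with corners $O_1,\dots,O_L$ of openings $\varpi_1,\dots,\varpi_L$ (all equal to $\varpi$ in the relevant case, but the general case is the same). By the classical regularity theory for the Dirichlet problem on polygons (Kondrat'ev \cite{Kond:1967}, Grisvard \cite{Grisvard:1972}, and in the $H^s$ scale \cite{DBook:1988}), for $s\in(s_\rP,2]$ every $U\in H^1$ on such a polygon with $\Delta U\in H^{s-2}$ and vanishing Dirichlet data admits a decomposition into a regular part in $H^s$ and a finite sum of corner singular functions: near each corner $O_\ell$ the only singular exponent below the regularity threshold is $\pi/\varpi_\ell$, with associated singular function $R_\ell^{\pi/\varpi_\ell}\sin(\pi\theta_\ell/\varpi_\ell)$. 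Since we require $s\le 2$, no second singular exponent $2\pi/\varpi_\ell$ or higher enters (here one uses $\varpi_\ell<\pi$ would change things, but the assumption is nonconvexity, so $\pi/\varpi_\ell<1$ and $2\pi/\varpi_\ell>1$; the hypothesis $s\le2$ keeps $2\pi/\varpi_\ell$ and the exponent $2$ out of range, modulo the harmless exceptional values which can be handled by interpolation exactly as done earlier in the paper for $s=3/2$). Multiplying by the cut-off $\Psi_\ell$ localizes each $S_\ell$ to its own corner and kills the influence of the smooth part of $\partial\rP$.

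The key steps, in order, are: \emph{(1)} Restrict to the polygonal region $\rP\cap\sB(\oR')$ and record that the only corners there are $O_1,\dots,O_L$; \emph{(2)} Invoke the polygon regularity-singularity theorem on this fixed bounded polygon to get, for each $s\in(s_\rP,2]$ (with $s\ne$ the exceptional values, the latter treated by interpolation), coefficients $D_\ell$ and a decomposition $U - \sum_\ell D_\ell\Psi_\ell S_\ell\in H^s$ locally near each corner; \emph{(3)} Patch the local decompositions together with a partition of unity subordinate to the corners and the smooth part of the boundary, using interior elliptic regularity (the data $\Delta U\in H^{s-2}$ plus $s\le2<$ the next threshold) to conclude that the remainder is globally in $H^s(\rP\cap\sB(\oR))$; \emph{(4)} Establish the a priori estimate \eqref{singregBRR'} by the usual closed-graph / contradiction-and-compactness argument, or by tracking the constants in the nested-domain estimates of the underlying reference, exactly mirroring the passage from \eqref{estBRR'} to \eqref{eq:estKj} in the proof of Theorem~\ref{th:Wmp}. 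The shrinkage from $\sB(\oR')$ to $\sB(\oR)$ is what allows one to absorb boundary-layer terms coming from the smooth part of $\partial\rP$ and to control the cut-offs $\Psi_\ell$.

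The main obstacle I expect is step \emph{(3)}–\emph{(4)}: producing a \emph{single} decomposition with \emph{one} constant $C$ that is uniform over all admissible $U$ (this is what makes the lemma usable for the series \eqref{uepsinner.eq1}, where it will be applied to each $K^+_j$). The cleanest route is the standard abstract one: the map $U\mapsto(U_{H^s},(D_\ell)_\ell)$ is, by the cited references, a well-defined linear bijection between the Banach spaces $\{U\in H^1(\rP\cap\sB(\oR'))\ :\ U|_{\partial\rP}=0,\ \Delta U\in H^{s-2}\}$ and $H^s(\rP\cap\sB(\oR))\times\C^L$ modulo the obvious compatibility, and one invokes the open mapping / closed graph theorem on the fixed domain $\rP\cap\sB(\oR')$ to get \eqref{singregBRR'}; the constant $C$ then depends only on $\rP$, $\oR$, $\oR'$ and $s$, as asserted. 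A secondary technical point is the exceptional Sobolev index (here $s=3/2$, where $s_\rP<3/2$ forces $\varpi<2\pi$, always true): for that value one defines $H^{s-2}$ with the correct ($H^{1/2}_{00}$-dual) convention and obtains the decomposition by Hilbert-space interpolation between $s$ slightly below and slightly above, just as the paper already does for the solution operators; the singular functions $\Psi_\ell S_\ell$ are the same for all $s$ in the interval $(s_\rP,2]$, so interpolation passes through them cleanly.
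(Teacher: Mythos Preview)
The paper does not give a proof of this lemma; it is simply stated as a known result and attributed to \cite{Grisvard:1972,DBook:1988}. Your outline is a faithful sketch of how the result is obtained in those references---localize to each corner, invoke the Kondrat'ev/Grisvard decomposition (only the single exponent $\pi/\varpi_\ell$ lies below the threshold since $s\le2$ and $\varpi_\ell\in(\pi,2\pi)$), use interior/boundary elliptic regularity on the smooth part, and shrink from $\oR'$ to $\oR$ to absorb the cut-off commutators---so in substance you are aligned with the paper, just more explicit.

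Two minor comments. First, your discussion of the exceptional index $s=3/2$ is unnecessary here: since $\varpi\in(\pi,2\pi)$ one has $s_\rP=1+\pi/\varpi\in(3/2,2)$, so $3/2\notin(s_\rP,2]$ and the issue never arises. Second, in step~(4) the map $U\mapsto(U_{H^s},(D_\ell)_\ell)$ is not a bijection (the shrinkage $\oR'\to\oR$ loses information), so a closed-graph argument is not the cleanest route; the a~priori estimate \eqref{singregBRR'} is obtained directly in the cited references by the nested-domain technique, exactly parallel to \eqref{estBRR'}, and that is all one needs.
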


We apply this splitting to the canonical harmonic functions $K^+_j$ and obtain the existence of coefficients $D_{j,\ell}$ such that
\begin{equation}
\label{eq:KjHs}
   K^+\dd{j,H^s} := K^+_j - \sum_{\ell=1}^L D_{j,\ell} \Psi_\ell S_\ell \in H^s(\rQ)
\end{equation}
with estimates, owing to \eqref{eq:H1Kj} (with $\lambda^+_j=\frac{j\pi}{\omega}$)
\begin{equation}
\label{eq:Kjreg}
   \Norm{K^+\dd{j,H^s}}{H^{s}(\rQ)} + \sum_{\ell=1}^L |D_{j,\ell}| \le 
   C (\tfrac{j\pi}{\omega}+1)^{1/2}\, (2R_0)^{\frac{j\pi}{\omega}+1} \,.
\end{equation}

Combining this with the inner expansion \eqref{uepsinner.eq1}, we find the splitting of $u_\varepsilon$
written for $x\in\Omega_\varepsilon\cap\sB(r_0/2)$ as (with $r_\ell=\varepsilon R_\ell$):
\begin{subequations}
\begin{equation}
\label{eq:Splitue}
   u_\varepsilon(x) = (u_\varepsilon)\dd{H^s}(x) + \sum_{\ell=1}^L d_{\varepsilon,\ell}
   \Psi_\ell\left(\frac{x}{\varepsilon}\right) S_\ell(r_\ell,\theta_\ell)
\end{equation}
with the regular part $(u_\varepsilon)\dd{H^s}$ and the coefficients $d_{\varepsilon,\ell}$ given by the
converging expansions
\begin{align}
\label{eq:innepsHs}
   (u_\varepsilon)\dd{H^s}(x) &= 
   \sum_{j\ge 1} \sum_{\sfe\in\sfE^\infty} \varepsilon^{\sfe+\lambda^+_j}
   \mathbf{c}_j(\sfu_\sfe)\, K^+\dd{j,H^s}\!\left(\frac{x}{\varepsilon}\right)\,, \\
   d_{\varepsilon,\ell} &=
   \sum_{j\ge 1} \sum_{\sfe\in\sfE^\infty} \varepsilon^{\sfe+\lambda^+_j}
   \mathbf{c}_j(\sfu_\sfe)\, D_{j,\ell} , \quad \ell=1,\ldots,L.
\label{eq:innepsd}
\end{align}
\end{subequations}
Following the same method as in the proof of Theorem \ref{th:Wmp}
we detach the principal contribution, coming from the term $j=1$, $\sfe=0$, and find:
\begin{subequations}
\begin{equation}
\label{eq:ueHsreg}
   \Norm{(u_\varepsilon)\dd{H^s}}{H^s(\varepsilon\rQ)} \ge 
   \varepsilon^{\frac{\pi}{\omega}+1-s} \left( 
   |\mathbf{c}_1(\sfu_0)|\, \Norm{K^+\dd{1,H^s}}{H^{s}(\rQ)}
   - \gamma_0\,\varepsilon^{\frac{\pi}{\omega}} \DNorm{f}{L^2(\Omega)}
   \right)
\end{equation}
where $\gamma_0$ is independent of $f$ and $\varepsilon$ (compare with \eqref{eq:ueHsQ}), and
\begin{equation}
   d_{\varepsilon,\ell} = 
   \varepsilon^{\frac{\pi}{\omega}-\frac{\pi}{\varpi}} \mathbf{c}_1(\sfu_0)\,D_{1,\ell}
   + \cO(\varepsilon^{\frac{2\pi}{\omega}-\frac{\pi}{\varpi}}).
\end{equation}
\end{subequations}
A similar behavior of singularity coefficients has been observed in \cite{DaToVi10}.

\begin{theorem}
\label{th:s>s_Omega}
Let $s\in(s_\rP,2]$ and $f\in \gF_\Omega$. 
If $\mathbf{c}_1(u_0)\ne0$, then both the regular part 
$(u_\varepsilon)\dd{H^s}$ and the coefficients $d_{\varepsilon,\ell}$ blow up as $\varepsilon\to0$ according to
\[
 \Norm{(u_\varepsilon)\dd{H^s}}{H^s(\varepsilon\rQ)} \ge 
   \gamma\varepsilon^{\frac{\pi}{\omega}+1-s}\;;\qquad
 |d_{\varepsilon,\ell}| \ge
   \gamma\varepsilon^{\frac{\pi}{\omega}-\frac{\pi}{\varpi}}
\]
with a constant $\gamma$ independent of $\varepsilon$.
\end{theorem}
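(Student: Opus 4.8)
Since $u_0=\sfu_0$ we have $\mathbf{c}_1(\sfu_0)=\mathbf{c}_1(u_0)\neq0$, and the two asserted lower bounds will follow at once from the estimate \eqref{eq:ueHsreg} and from the asymptotics of $d_{\varepsilon,\ell}$ displayed just above the theorem, \emph{provided} two facts are secured: (a) the seminorm $\Norm{K^+\dd{1,H^s}}{H^s(\rQ)}$ is strictly positive, and (b) $D_{1,\ell}\neq0$ for every $\ell=1,\dots,L$. Indeed, granting (a), the bracket in \eqref{eq:ueHsreg} stays $\geq\tfrac12|\mathbf{c}_1(u_0)|\,\Norm{K^+\dd{1,H^s}}{H^s(\rQ)}$ as soon as $\gamma_0\varepsilon^{\pi/\omega}\DNorm{f}{L^2(\Omega)}$ is small enough; granting (b), and since $\tfrac{2\pi}{\omega}-\tfrac{\pi}{\varpi}>\tfrac{\pi}{\omega}-\tfrac{\pi}{\varpi}$, the leading term of $d_{\varepsilon,\ell}$ dominates the $\cO$-remainder for small $\varepsilon$. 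So the plan is to reduce the whole statement to (a) and (b).

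For (a) I would argue exactly as in Step~2 of the proof of Theorem~\ref{th:Wmp}. On the connected open set $\Gamma\cap\sA(R_0,R'_0)$, which is contained in $\rQ$ and on which all cut-offs $\Psi_\ell$ vanish (we may assume each $\Psi_\ell$ supported in $\sB(R_0)$, since the corners $O_\ell$ lie there), one has $K^+\dd{1,H^s}=K^+_1$. Were the $H^s$-seminorm of $K^+\dd{1,H^s}$ over $\rQ$ zero, then $K^+_1$ would be affine on that set, hence by unique continuation affine on the connected domain $\rP$; since $K^+_1$ vanishes on $\partial\rP$, which is not contained in a single line, it would then vanish identically, contradicting $K^+_1\not\equiv0$. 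The finiteness of $\Norm{K^+\dd{j,H^s}}{H^s(\rQ)}$ and $\Norm{K^+_1}{H^s(\rQ)}$ needed here and above is already provided by \eqref{eq:Kjreg}.

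The hard part is (b), which I expect to be the main obstacle. The plan is to prove $K^+_1>0$ in the interior of $\rP$ and then to read off $D_{1,\ell}\neq0$ by the positivity argument of Remark~\ref{R:pos}. For positivity I would expand $K^+_1$ in the cone $\rP\cap\sB^\complement(R_0)=\Gamma\cap\sB^\complement(R_0)$ by separation of variables; as $K^+_1$ is harmonic there and belongs to $H^1$ near infinity, only the decaying modes survive besides the leading one coming from \eqref{eq:Kj} (and the compact support of $\Delta(\Phi h^+_1)$), so for $r$ large
\[
   K^+_1(r,\theta)=\big(r^{\pi/\omega}+b_1r^{-\pi/\omega}\big)\sin\tfrac{\pi\theta}{\omega}
   +\sum_{j\ge2}b_j\,r^{-j\pi/\omega}\sin\tfrac{j\pi\theta}{\omega},
\]
a convergent series. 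Using the elementary bound $|\sin(jx)|\le j|\sin x|$ for $x\in(0,\pi)$, one factors $\sin\tfrac{\pi\theta}{\omega}\ge0$ out of the remainder and bounds $K^+_1$ from below by $\sin\tfrac{\pi\theta}{\omega}\big(r^{\pi/\omega}+b_1r^{-\pi/\omega}-\sum_{j\ge2}j|b_j|r^{-j\pi/\omega}\big)$, which is $\ge0$ for all $r\ge R_*$ with $R_*$ large. Since $K^+_1=0$ on $\partial\rP$, the maximum principle on $\rP\cap\sB(R_*)$ then yields $K^+_1\ge0$ on $\rP\cap\sB(R_*)$, hence on all of $\rP$, and the strong maximum principle upgrades this to $K^+_1>0$ in the interior (as $K^+_1\not\equiv0$). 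In particular $K^+_1>0$ in the interior near each corner $O_\ell$; writing its local harmonic expansion there as $\sum_{k\ge1}a_{\ell,k}R_\ell^{k\pi/\varpi}\sin\tfrac{k\pi\theta_\ell}{\varpi}$ with $a_{\ell,1}=D_{1,\ell}$, and noting that $\sin\tfrac{\pi\theta_\ell}{\varpi}>0$ for $\theta_\ell\in(0,\varpi)$ whereas $\sin\tfrac{k\pi\theta_\ell}{\varpi}$ changes sign for every $k\ge2$, positivity forces $D_{1,\ell}\neq0$; equivalently, integrating $K^+_1(\rho,\cdot)$ against the positive function $\sin\tfrac{\pi\theta_\ell}{\varpi}$ over the arc at $O_\ell$ as in \eqref{eq:cjdef} gives a nonzero number, exactly as in Remark~\ref{R:pos}. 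The genuinely delicate point is controlling the sign of $K^+_1$ uniformly up to the edges $\theta=0,\omega$ near infinity, where the leading mode degenerates; the inequality $|\sin(jx)|\le j|\sin x|$ is precisely what makes the uniform lower bound work.
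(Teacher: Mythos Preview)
Your proof is correct and follows essentially the same route as the paper: reduce to showing $D_{1,\ell}\neq0$ via positivity of $K^+_1$ and the maximum principle, exactly as in Remark~\ref{R:pos}. You are in fact more thorough than the paper on two points: you supply the argument for (a) (positivity of $\Norm{K^+\dd{1,H^s}}{H^s(\rQ)}$), which the paper leaves implicit, and your use of $|\sin(jx)|\le j\sin x$ gives the uniform-up-to-the-boundary control near infinity that the paper's one-line justification (``continuous on $\overline{\rP}$ and behaves asymptotically like $h_1^+$'') glosses over.
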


\begin{proof}
It remains to show that $D_{1,\ell}\ne0$. This follows from the maximum principle, compare Remark~\ref{R:pos}, applied to the non-negative harmonic function $K_{1}^{+}$. The latter is non negative, because it is continuous on $\overline{\rP}$ and behaves asymptotically like $h_{1}^{+}$ at infinity. 
\end{proof}

Conversely, if a right-hand side $f\in \gF_\Omega$ is such that $u_\varepsilon$ belongs to $H^2(\Omega_\varepsilon)$ for some value of $\varepsilon$, we find an upper bound for the singularity coefficient $\mathbf{c}_1(u_0)$ of the solution $u_0$ of the limiting problem \eqref{eq:pbeps} in $\Omega$. 

\begin{theorem}
\label{th:H2}
There exists a constant $C_{\Omega,\rP}$ (only depending on $\Omega$ and $\rP$) such that the following holds: If $f\in \gF_\Omega$ and $\varepsilon\in(0,\varepsilon_\star]$ are such that
\[
   f\in A_1^\heartsuit(\Omega_{\varepsilon}),\quad\mbox{i.e.}\quad
   u_\varepsilon\in H^2(\Omega_\varepsilon),
\]
then there holds
\begin{equation}
\label{eq:c1bis}
   |\mathbf{c}_1(u_0)| 
   \le C_{\Omega,\rP}\, \varepsilon^{1-\frac{\pi}{\omega}} \DNorm{f}{L^2(\Omega)} \,.
\end{equation}
\end{theorem}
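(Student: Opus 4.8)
The key idea is to exploit the $H^2$ identity \eqref{eq:H2} together with the splitting \eqref{eq:Splitue}--\eqref{eq:innepsd} of $u_\varepsilon$ localized near the corner cluster. The hypothesis $u_\varepsilon\in H^2(\Omega_\varepsilon)$ forces \emph{all} singularity coefficients $d_{\varepsilon,\ell}$ to vanish, since each corner $\textsc{x}_\ell=\varepsilon O_\ell$ is non-convex and the functions $\Psi_\ell(\cdot/\varepsilon)S_\ell(r_\ell,\theta_\ell)$ are precisely the singular functions obstructing $H^2$ regularity there; indeed $\frac{\pi}{\varpi_\ell}<1$ so $\Psi_\ell(\cdot/\varepsilon)S_\ell(r_\ell,\theta_\ell)\notin H^2$. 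So from \eqref{eq:innepsd} we get, for each $\ell$,
\[
   0 = d_{\varepsilon,\ell} =
   \sum_{j\ge 1} \sum_{\sfe\in\sfE^\infty} \varepsilon^{\sfe+\lambda^+_j}
   \mathbf{c}_j(\sfu_\sfe)\, D_{j,\ell}.
\]
Isolating the leading term $j=1$, $\sfe=0$ (which carries $\varepsilon^{\lambda^+_1}=\varepsilon^{\pi/\omega}$ and coefficient $D_{1,\ell}\neq0$, by the maximum-principle argument of the proof of Theorem~\ref{th:s>s_Omega}) gives
\[
   \varepsilon^{\frac{\pi}{\omega}} \mathbf{c}_1(\sfu_0)\,D_{1,\ell}
   = - \!\!\!\sum_{(j,\sfe)\neq(1,0)}\!\!\! \varepsilon^{\sfe+\lambda^+_j}
   \mathbf{c}_j(\sfu_\sfe)\, D_{j,\ell}.
\]
Recalling $u_0=\sfu_0$, it remains to bound the right-hand side by $C\,\varepsilon^{2\pi/\omega}\DNorm{f}{L^2(\Omega)}$, which upon dividing by $|D_{1,\ell}|\varepsilon^{\pi/\omega}$ and then dividing by $\varepsilon^{\pi/\omega}$... wait — actually dividing only by $\varepsilon^{\pi/\omega}|D_{1,\ell}|$ yields $|\mathbf{c}_1(u_0)|\le C\,\varepsilon^{\pi/\omega}\DNorm{f}{L^2(\Omega)}$, which is even stronger than \eqref{eq:c1bis} since $\pi/\omega>1-\pi/\omega$ is false in general; so the correct target exponent comes from the weakest remaining term, and one must verify it is $\varepsilon^{\,\ge 1}$. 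Concretely, $\lambda^+_2-\lambda^+_1=\pi/\omega$ and the smallest positive exponent in $\sfE^\infty$ is $\lambda^+_1=\pi/\omega$, so the dominant correction term is $\varepsilon^{2\pi/\omega}$; after division by $\varepsilon^{\pi/\omega}$ one obtains $|\mathbf{c}_1(u_0)|\le C\varepsilon^{\pi/\omega}\DNorm{f}{L^2}$, and since $\pi/\omega\ge 1-\pi/\omega$ iff $\omega\le 2\pi$ — always true — this implies \eqref{eq:c1bis}. (If one prefers the exact exponent $1-\pi/\omega$, it corresponds to the stronger claim and the proof above suffices; one may simply record \eqref{eq:c1bis} as stated.)

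The technical content is therefore: (1) establish $d_{\varepsilon,\ell}=0$ rigorously from $u_\varepsilon\in H^2$, using that the decomposition \eqref{eq:Splitue} holds and that the $H^2$-regular part $(u_\varepsilon)_{H^s}$ with $s=2$ lies in $H^2$ while $\Psi_\ell(\cdot/\varepsilon)S_\ell\notin H^2$ near $\textsc{x}_\ell$ — uniqueness of such a decomposition (Theorem~\ref{T:regsing} with $m=2,p=2,\beta=0$ applied near each corner $\textsc{x}_\ell$) forces the coefficients to vanish; (2) bound the tail $\sum_{(j,\sfe)\neq(1,0)}\varepsilon^{\sfe+\lambda^+_j}|\mathbf{c}_j(\sfu_\sfe)||D_{j,\ell}|$. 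For (2), combine the bound $|D_{j,\ell}|\le C(\tfrac{j\pi}{\omega}+1)^{1/2}(2R_0)^{j\pi/\omega+1}$ from \eqref{eq:Kjreg}, the coefficient bound $\sqrt{\lambda^+_j}(\tfrac{r_0}{2})^{\lambda^+_j+\frac n2-1}|\mathbf{c}_j(\sfu_\sfe)|\le \DNorm{\sfu_\sfe}{H^1(\Omega)}$ from Step~4 of the proof of Theorem~\ref{th:Wmp}, and the convergence estimate \eqref{eq:sfuepsb}, together with Lemmas~\ref{lem:S1} and \ref{lem:S2} (taking $\xi<\tfrac{r_0}{4R_0}$); this yields exactly a bound $\gamma_0(\varepsilon^{\lambda^+_1}+\varepsilon^{\lambda^+_2-\lambda^+_1})\varepsilon^{\pi/\omega}\DNorm{f}{L^2(\Omega)}$ for the tail, i.e. $\cO(\varepsilon^{2\pi/\omega})$, as already recorded in the displayed asymptotics for $d_{\varepsilon,\ell}$ preceding Theorem~\ref{th:s>s_Omega}.

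The main obstacle is step (1): making the vanishing of $d_{\varepsilon,\ell}$ airtight requires knowing that the converging inner expansion \eqref{eq:Splitue} is genuinely the singular/regular decomposition near each $\textsc{x}_\ell$, i.e. that the cutoffs $\Psi_\ell(\cdot/\varepsilon)$ isolate the corner $\textsc{x}_\ell$ from the others and from the smooth part of $\partial\Omega_\varepsilon$, and that summing the convergent series in $H^1$ does not destroy this structure — but this is exactly the content of how \eqref{eq:KjHs}--\eqref{eq:innepsd} were derived, so it is available. A minor point is that $\varepsilon\le\varepsilon_\star$ must be small enough that the various geometric separations (between corners $O_\ell$ scaled by $\varepsilon$, and the ball $\sB(r_0/2)$) are respected and that Lemmas~\ref{lem:S1}, \ref{lem:S2} apply; absorbing everything into a single constant $C_{\Omega,\rP}$ and restricting to $\varepsilon\in(0,\varepsilon_\star]$ handles this. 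Dividing the identity $\varepsilon^{\pi/\omega}|\mathbf{c}_1(u_0)||D_{1,\ell}|\le\gamma_0\varepsilon^{2\pi/\omega}\DNorm{f}{L^2}$ through by $\varepsilon^{2\pi/\omega-1}|D_{1,\ell}|$ — or simply by $\varepsilon^{\pi/\omega}|D_{1,\ell}|$ and then using $\varepsilon^{\pi/\omega}\le\varepsilon^{1-\pi/\omega}$ for $\varepsilon\le1$ when $\omega\le2\pi$ — and setting $C_{\Omega,\rP}=\gamma_0/\min_\ell|D_{1,\ell}|$ completes the proof of \eqref{eq:c1bis}.
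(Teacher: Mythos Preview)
Your argument is correct, but it takes a genuinely different route from the paper's own proof.

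The paper argues as follows: the hypothesis $u_\varepsilon\in H^2(\Omega_\varepsilon)$ means $u_\varepsilon=(u_\varepsilon)_{H^2}$, so the lower bound \eqref{eq:ueHsreg} with $s=2$ reads
\[
   \Norm{u_\varepsilon}{H^2(\varepsilon\rQ)} \ge
   \varepsilon^{\frac{\pi}{\omega}-1}\Big(|\mathbf{c}_1(u_0)|\,\Norm{K^+\dd{1,H^2}}{H^2(\rQ)}
   - \gamma_0\,\varepsilon^{\frac{\pi}{\omega}}\DNorm{f}{L^2(\Omega)}\Big).
\]
Rearranging and invoking the uniform bound $\DNorm{u_\varepsilon}{H^2(\Omega_\varepsilon)}\le M\DNorm{f}{L^2(\Omega)}$ from the identity \eqref{eq:H2} gives \eqref{eq:c1bis} directly, with the exponent $1-\frac{\pi}{\omega}$ arising from the $H^2$ scaling factor $\varepsilon^{\frac{n}{p}-m}=\varepsilon^{-1}$.

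You instead exploit the \emph{coefficient} side of the splitting: $u_\varepsilon\in H^2$ forces $d_{\varepsilon,\ell}=0$, and then the expansion \eqref{eq:innepsd} together with $D_{1,\ell}\ne0$ and the tail estimate (via Lemmas~\ref{lem:S1}--\ref{lem:S2} and \eqref{eq:Kjreg}) yields $|\mathbf{c}_1(u_0)|\le C\varepsilon^{\pi/\omega}\DNorm{f}{L^2(\Omega)}$. Since $\omega<2\pi$ gives $\frac{\pi}{\omega}>1-\frac{\pi}{\omega}$, your bound is actually \emph{sharper} than \eqref{eq:c1bis}. The paper's route is shorter and makes transparent the role of the uniform $H^2$ estimate \eqref{eq:H2}, which is the structural reason the theorem holds; your route bypasses \eqref{eq:H2} entirely, is more algebraic, and extracts a better exponent, at the price of needing $D_{1,\ell}\ne0$ (established in the proof of Theorem~\ref{th:s>s_Omega}). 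Both are valid; your exposition would benefit from removing the hesitations and stating the improved exponent $\frac{\pi}{\omega}$ as the conclusion.
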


\begin{proof}
The assumption on $u_\varepsilon$ means that $u_\varepsilon = (u_\varepsilon)\dd{H^2}$. Then \eqref{eq:ueHsreg} yields
\[
   \Norm{u_\varepsilon}{H^2(\varepsilon\rQ)} \ge 
   \varepsilon^{\frac{\pi}{\omega}-1} \left( 
   |\mathbf{c}_1(u_0)|\, \Norm{K^+\dd{1,H^2}}{H^{2}(\rQ)}
   - \gamma_0\,\varepsilon^{\frac{\pi}{\omega}} \DNorm{f}{L^2(\Omega)}
   \right),
\]
hence the inequality
\[
   |\mathbf{c}_1(u_0)|\, \Norm{K^+\dd{1,H^2}}{H^{2}(\rQ)}
   \le
   \varepsilon^{1-\frac{\pi}{\omega}} 
   \DNorm{u_\varepsilon}{H^2(\Omega_\varepsilon)} +
   \gamma_0\,\varepsilon^{\frac{\pi}{\omega}} \DNorm{f}{L^2(\Omega)}.
\]
Since \eqref{eq:H2} implies the uniform bound $\DNorm{u_\varepsilon}{H^2(\Omega_\varepsilon)}\le M\DNorm{f}{L^2(\Omega)}$, the theorem is proved.
\end{proof}

\section*{Statement and Declaration}
The authors acknowledge the support of the Centre Henri Lebesgue
ANR-11-LABX-0020-01.\\
The authors declare that they have no competing interests.\\
No data were generated.\\
Both authors contributed to the ideas, contents, and formulation of this work.


\end{document}